\newdimen\symskip
\newdimen\defskip
\newdimen\parind
\newdimen\leftmarge
\newdimen\theoremshape
\newcommand*{\клей}{\nobreak\hskip\z@skip}
\newcommand{\?}{\,\nobreak\hskip0pt}
\renewcommand{\"}{''}
\renewcommand{\:}{\textup{:}}
\renewcommand{\~}{\textup{;}}
\DeclareRobustCommand*{\т}{~\textemdash{} }
\DeclareRobustCommand*{\д}{\клей\hbox{-}\клей}
\newcommand{\no}{}
\renewcommand{\@listI}{\settowidth\labelwidth{\labheadi{\no}}\listipar{\parind}{\labelwidth}}
\newcommand{\listivpar}{\topsep\defskip\partopsep0pt\parsep-\parskip\itemsep0.5\topsep}
\newcommand{\listipar}[2]{\rightmargin0pt\leftmargin#1\labelsep#1\advance\labelsep-#2\itemindent0pt\listivpar}
\renewcommand{\@listii}{\settowidth\labelwidth{\labheadii{\@roman{\no}}}\listiipar{\parind}{\labelwidth}}
\newcommand{\listiivpar}{\topsep0.5\defskip\partopsep0pt\parsep-\parskip\itemsep0.5\topsep}
\newcommand{\listiipar}[2]{\rightmargin0pt\leftmargin#1\labelsep#1\advance\labelsep-#2\itemindent0pt\listiivpar}
\def\thempfn{\ifcase\value{footnote}1\or *\or **\or ***\else\@ctrerr\fi}
\renewcommand\footnoterule{%
  \kern-3\p@
  \hrule\@width1in
  \kern2.6\p@}
\renewcommand{\@biblabel}[1]{[#1]}
\renewenvironment{thebibliography}[1]
     {\renewcommand{\refname}{Литература}%
      \section*{\refname}%
      \@mkboth{\MakeUppercase\refname}{\MakeUppercase\refname}%
      \list{\@biblabel{\@arabic\c@enumiv}}%
           {\itemsep\baselineskip
            \leftmargin\parind
            \settowidth\labelwidth{\@biblabel{#1}}%
            \labelsep\parind\advance\labelsep-\labelwidth
            \@openbib@code
            \usecounter{enumiv}%
            \let\p@enumiv\@empty
            \renewcommand\theenumiv{\@arabic\c@enumiv}}%
      \sloppy
      \clubpenalty4000
      \@clubpenalty\clubpenalty
      \widowpenalty4000%
      \sfcode`\.\@m}
     {\def\@noitemerr
       {\@latex@warning{Empty `thebibliography' environment}}%
      \endlist}
\def\@maketitle{%
  \newpage
  \vskip0.5em%
  УДК \udk%
  \vskip0.5em%
  MSC \msc%
  \vskip1em%
  \begin{center}\bf%
  \let\footnote\thanks%
   {\Large\@author\par}%
   \vskip1.5em%
   {\LARGE\@title\par}%
   \vskip1em%
   {\large\@date}%
  \end{center}%
  \par
  \vskip1.5em}
\def\@title{\@latex@warning@no@line{No \noexpand\title given}}
\renewcommand\sectionmark[1]{%
 \markright{%
  \ifnum \c@secnumdepth >\z@
   \thesection. \ %
  \fi
 #1}}%
\renewcommand{\section}{\@startsection{section}{1}{0pt}%
{5.5ex plus .5ex minus .2ex}{1.5ex plus .3ex}%
{\center\normalfont\Large\bfseries\sffamily\bom}}
\renewcommand{\subsection}{\@startsection{subsection}{2}{0pt}%
{4.5ex plus .4ex minus .2ex}{0.75ex plus .2ex}%
{\center\normalfont\large\bfseries\sffamily\bom}}
\renewcommand{\subsubsection}{\@startsection{subsubsection}{3}{0pt}%
{2.5ex plus .5ex minus .2ex}{1ex plus .2ex}%
{\center\normalfont\bfseries\sffamily\bom}}
\newcommand{\Ss}{\textup{\S\,}}
\def\@postskip@{\hskip.5em\relax}
\def\postsection{.\@postskip@}
\def\postsubsection{.\@postskip@}
\def\postsubsubsection{.\@postskip@}
\def\postparagraph{.\@postskip@}
\def\postsubparagraph{.\@postskip@}
\def\@seccntformat#1{\csname pre#1\endcsname\csname the#1\endcsname\csname post#1\endcsname}
\renewcommand{\thesection}{\textup{\arabic{section}}}
\newcommand{\parr}{\par\addvspace{\defskip}}
\newcommand{\theo}[2]{\newtheorem{#1}{#2}[section]}
\newcommand{\deff}[2]{\newenvironment{#1}{\parr\textbf{#2.}}{\parr}}
\newenvironment{cass}[1]{\begin{cas}\label{#1}\upshape}{\end{cas}}
\def\@begintheorem#1#2[#3]{%
  \deferred@thm@head{\the\thm@headfont \thm@indent
    \@ifempty{#1}{\let\thmname\@gobble}{\let\thmname\@iden}%
    \@ifempty{#2}{\let\thmnumber\@gobble}{\let\thmnumber\@iden}%
    \@ifempty{#3}{\let\thmnote\@gobble}{\let\thmnote\@iden}%
    \thm@notefont{\bfseries\upshape}%
    \indent%
    \thm@swap\swappedhead\thmhead{#1}{#2}{#3}%
    \the\thm@headpunct
    \thmheadnl 
    \hskip\thm@headsep
  }%
  \ignorespaces}
\renewenvironment{proof}{\setcounter{cas}{0}\parr\pushQED{\qed}\normalfont$\square\quad$}{\setcounter{cas}{0}\popQED\@endpefalse\parr}
\newcommand{\labheadi}[1]{\textup{#1)}}
\newcommand{\labheadii}[1]{\textup{(#1)}}
\newenvironment{nums}[1]{\renewcommand{\no}{#1}\begin{enumerate}}{\end{enumerate}}
\newcommand{\eqn}[1]{\begin{equation}#1\end{equation}}
\newcommand{\equ}[1]{\begin{equation*}#1\end{equation*}}
\newcommand{\case}[1]{\begin{cases}#1\end{cases}}
\newcommand{\rbmat}[1]{\begin{pmatrix}#1\end{pmatrix}}
\def\LT@makecaption#1#2#3{%
  \LT@mcol\LT@cols c{\hbox to\z@{\hss\parbox[t]\LTcapwidth{%
    \sbox\@tempboxa{#1{#2. }#3}%
    \ifdim\wd\@tempboxa>\hsize
      #1{#2. }#3%
    \else
      \hbox to\hsize{\hfil\box\@tempboxa\hfil}%
    \fi
    \endgraf\vskip\baselineskip}%
  \hss}}}
\newcounter{numt}
\newcounter{col}
\newcounter{coll}
\renewcommand{\ge}{\geqslant}
\renewcommand{\le}{\leqslant}
\newcommand{\fa}{\,\forall\,}
\newcommand{\exi}{\,\exists\,}
\newcommand{\bes}{\infty}
\newcommand{\es}{\varnothing}
\newcommand{\eqi}{\equiv}
\newcommand{\subs}{\subset}
\newcommand{\sups}{\supset}
\newcommand{\sm}{\setminus}
\newcommand{\wo}{\backslash}
\newcommand{\cln}{\colon}
\newcommand{\wg}{\wedge}
\newcommand{\Ra}{\Rightarrow}
\newcommand{\os}[1]{\overset{#1}}
\newcommand{\Inn}[1]{\smash{\os{\circ}{\smash{#1}\vph{^{_{^{_{c}}}}}}}\vph{#1}}
\newcommand{\wt}{\widetilde}
\newcommand{\nd}{\,\&\,}
\newcommand{\sst}[1]{\substack{#1}}
\newcommand{\suml}[2]{\sum\limits_{{#1}}^{{#2}}}
\newcommand{\sums}[1]{\sum\limits_{{#1}}}
\newcommand{\opluss}[1]{\bigoplus\limits_{{#1}}^{}}
\newcommand{\capl}[2]{\bigcap\limits_{{#1}}^{{#2}}}
\renewcommand{\caps}[1]{\bigcap\limits_{{#1}}}
\newcommand{\cupl}[2]{\bigcup\limits_{{#1}}^{{#2}}}
\newcommand*{\bw}[1]{#1\nobreak\discretionary{}{\hbox{$\mathsurround=0pt #1$}}{}}
\newcommand{\sco}{,\ldots,}
\newcommand{\spl}{\bw+\ldots\bw+}
\newcommand{\smn}{\bw-\ldots\bw-}
\newcommand{\seq}{\bw=\ldots\bw=}
\newcommand{\sge}{\bw\ge\ldots\bw\ge}
\newcommand{\sd}{\bw\cdot\ldots\bw\cdot}
\newcommand{\ha}[1]{\left\langle#1\right\rangle}
\newcommand{\ba}[1]{\bigl\langle#1\bigr\rangle}
\newcommand{\br}[1]{\bigl(#1\bigr)}
\newcommand{\Br}[1]{\Bigl(#1\Bigr)}
\newcommand{\ter}[1]{\textup{(}#1\textup{)}}
\newcommand{\bgm}[1]{\bigl|#1\bigr|}
\newcommand{\Bm}[1]{\Bigl|#1\Bigr|}
\newcommand{\hn}[1]{\left\|#1\right\|}
\newcommand{\bn}[1]{\bigl\|#1\bigr\|}
\newcommand{\Bn}[1]{\Bigl\|#1\Bigr\|}
\newcommand{\bc}[1]{\bigl\{#1\bigr\}}
\newcommand{\BC}[1]{\Bigl\{#1\Bigr\}}
\newcommand{\bbl}{\bigm\wo}
\newcommand{\mbb}{\mathbb}
\newcommand{\mbf}{\mathbf}
\newcommand{\mcl}{\mathcal}
\newcommand{\mfr}{\mathfrak}
\newcommand{\R}{\mbb{R}}
\newcommand{\Z}{\mbb{Z}}
\newcommand{\N}{\mbb{N}}
\newcommand{\Cbb}{\mbb{C}}
\newcommand{\Eb}{\mbb{E}}
\newcommand{\Pc}{\mcl{P}}
\newcommand{\agt}{\mfr{a}}
\newcommand{\ggt}{\mfr{g}}
\newcommand{\hgt}{\mfr{h}}
\newcommand{\tgt}{\mfr{t}}
\newcommand{\pd}{\partial}
\newcommand{\al}{\alpha}
\newcommand{\de}{\delta}
\newcommand{\De}{\Delta}
\newcommand{\ep}{\varepsilon}
\newcommand{\la}{\lambda}
\newcommand{\La}{\Lambda}
\newcommand{\si}{\sigma}
\newcommand{\ta}{\theta}
\newcommand{\ph}{\varphi}
\newcommand{\om}{\omega}
\newcommand{\Om}{\Omega}
\DeclareMathOperator{\Aut}{Aut}
\DeclareMathOperator{\Lie}{Lie}
\DeclareMathOperator{\Ker}{Ker}
\DeclareMathOperator{\ad}{ad}
\DeclareMathOperator{\rk}{rk}
\DeclareMathOperator{\id}{id}
\DeclareMathOperator{\conv}{conv}
\newcommand{\GL}{\mbf{GL}}
\newcommand{\SL}{\mbf{SL}}
\newcommand{\Or}{\mbf{O}}
\newcommand{\SO}{\mbf{SO}}
\newcommand{\Un}{\mbf{U}}
\newcommand{\SU}{\mbf{SU}}
\newcommand{\glg}{\mfr{gl}}
\newcommand{\slg}{\mfr{sl}}
\newcommand{\sug}{\mfr{su}}
\newcommand{\bom}{\boldmath}
\newcommand{\vph}[1]{\vphantom{#1}}
\begin{document}

\author{О.\,Г.\?Стырт}
\title{О~пространстве орбит\\
неприводимого представления\\
специальной унитарной группы}
\date{}
\newcommand{\udk}{512.815.1+512.815.6+512.816.1+512.816.2}
\newcommand{\msc}{22E46+17B10+17B20+17B45}

\maketitle

{\leftskip\parind\rightskip\parind
Доказано, что фактор неприводимого представления специальной унитарной группы ранга более~$1$ не может быть гладким многообразием.


\smallskip

\textbf{Ключевые слова\:} группа Ли, топологический фактор действия.

\smallskip

It is proved that the factor of an irreducible representation of the special unitary group of rank greater than~$1$ can not be a~smooth manifold.

\smallskip

\textbf{Key words\:} Lie group, topological factor of an action.\par}

\section{Введение}\label{introd}

Настоящая работа является непосредственным продолжением статей \cite{My1,My2}. Вначале дадим три базовых определения, игравших ключевую роль
и~в~указанных работах.

\begin{df} Непрерывное отображение гладких многообразий назовём \textit{ку\-соч\-но-глад\-ким}, если оно переводит любое гладкое подмногообразие
в~конечное объединение гладких подмногообразий.
\end{df}

В частности, всякое собственное гладкое отображение гладких многообразий является ку\-соч\-но-глад\-ким.

Рассмотрим дифференцируемое действие компактной группы Ли~$G$ на гладком многообразии~$M$.

\begin{df} Будем говорить, что фактор $M/G$ \textit{ку\-соч\-но-диф\-фео\-мор\-фен} гладкому многообразию~$M'$, если топологический фактор $M/G$
гомеоморфен~$M'$, причём отображение факторизации $M\to M'$ ку\-соч\-но-гладкое.
\end{df}

\begin{df} Будем говорить, что фактор $M/G$ является \textit{гладким многообразием}, если он ку\-соч\-но-диф\-фео\-мор\-фен некоторому гладкому
многообразию.
\end{df}

Перейдём непосредственно к~постановке задачи.

Пусть $V$\т вещественное векторное пространство, а~$G\subs\GL(V)$\т компактная линейная группа. Как и~в~работах~\cite{My1,My2}, нас интересует вопрос
о~том, является ли фактор $V/G$ топологическим многообразием, а~также является ли он гладким многообразием. Следуя~\cite{My1,My2}, будем называть
топологическое многообразие просто <<многообразием>>.

Обозначим через~$V_{\Cbb}$ комплексное пространство $V\otimes\Cbb$, через~$\ggt$\т линейную алгебру Ли $\Lie G\subs\glg(V)$, а~через~$\ggt_{\Cbb}$\т
комплексную линейную алгебру Ли $\ggt\otimes\Cbb\subs\glg(V_{\Cbb})$.

К~настоящему моменту разобраны два случая\: $[\ggt,\ggt]=0$ (см.~\cite{My1}) и~$\ggt\cong\sug_2$ (см.~\cite{My2}). В~данной же работе рассматривается
случай, когда $\ggt\cong\sug_{r+1}$, где $r=\rk\ggt>1$, причём линейная алгебра Ли $\ggt\subs\glg(V)$ неприводима.

Будем обозначать представление комплексной редуктивной группы Ли, сопряжённое представлению~$R$, через~$R'$.

Говоря о~неразложимых системах простых корней, мы будем использовать нумерацию простых корней, принятую в~\cite[табл.\,1]{VO} и~\cite[табл.\,1]{Elsh},
обозначая через $(i_1\sco i_m)$ подмножество простых корней с~номерами $i_1\sco i_m$, через~$\ph_i$\т фундаментальный вес с~номером~$i$, а~через
$\ph(\hgt)$\т линейную алгебру, отвечающую неприводимому представлению комплексной простой алгебры~$\hgt$ со старшим весом~$\ph$.

Пространство~$V$ обладает $G$\д инвариантным скалярным умножением и~поэтому может (и~будет) рассматриваться как евклидово пространство, на котором
группа~$G$ действует ортогональными операторами. Таким образом, $G\subs\Or(V)$.

Пусть $\wt{R}$\т тавтологическое представление $\ggt_{\Cbb}\cln V_{\Cbb}$. Возможны следующие случаи\:
\begin{nums}{-1}
\item $\wt{R}=R$, где $R$\т точное неприводимое представление\~
\item $\wt{R}=R+R'$, где $R$\т точное неприводимое представление.
\end{nums}
Во втором случае пространство~$V$ обладает $\ggt$\д инвариантной комплексной структурой, в~результате чего естественным образом возникает (комплексное)
представление $\ggt_{\Cbb}\cln V$, изоморфное представлению~$R$.

Теперь предположим, что $\ggt\cong\sug_{r+1}$, где $r=\rk\ggt>1$, причём линейная алгебра Ли $\ggt\subs\glg(V)$ неприводима.

В~настоящей работе будут доказаны теоремы \ref{main} и~\ref{main1}.

\begin{theorem}\label{main} Фактор $V/G$ может быть гладким многообразием лишь в~следующих случаях\:
\begin{nums}{-1}
\item представление~$\wt{R}$ алгебры~$\ggt_{\Cbb}$ совпадает~с~представлением~$R$ и~изоморфно одному из представлений $\ad$, $R_{\ph_2}$ \ter{$r=3$},
$R_{2\ph_2}$ \ter{$r=3$} и~$R_{\ph_4}$ \ter{$r=7$}\~
\item $\wt{R}=R+R'$, а~представление~$R$ изоморфно \ter{с~точностью до внешнего автоморфизма алгебры~$\ggt_{\Cbb}$} одному из представлений
$R_{\ph_1}$, $R_{2\ph_1}$, $R_{\ph_2}$ \ter{$r>3$} и~$R_{\ph_3}$ \ter{$r=5$}.
\end{nums}
\end{theorem}

\begin{theorem}\label{main1} Если $G=G^0$, а~представление~$R$ изоморфно \ter{с~точностью до внешнего автоморфизма алгебры~$\ggt_{\Cbb}$} одному из
представлений $\ad$, $R_{\ph_1}$, $R_{2\ph_1}$, $R_{\ph_2}$ \ter{$r>2$}, $R_{2\ph_2}$ \ter{$r=3$}, $R_{\ph_3}$ \ter{$r=5$} и~$R_{\ph_4}$ \ter{$r=7$}, то
$V/G$\т не многообразие.
\end{theorem}

\begin{imp}\label{main2} При $G=G^0$ фактор $V/G$ не является гладким многообразием.
\end{imp}

В~\Ss\ref{facts} введён ряд используемых в~работе обозначений, а~также доказаны некоторые вспомогательные утверждения. В~\Ss\ref{promain} доказана
теорема~\ref{main}, а~в~\Ss\ref{promain1}\т теорема~\ref{main1}.

Автор благодарит профессора Э.\,Б.\?Винберга за постоянную поддержку в~научной деятельности и~многочисленные ценные советы.

\section{Обозначения и вспомогательные факты}\label{facts}

В~этом параграфе приведён ряд вспомогательных обозначений и~утверждений, в~том числе заимствованных из~\cite{My1,My2,My0} (все новые утверждения\т
с~доказательствами).

Для линейного представления группы Ли~$G$ (соотв. алгебры Ли~$\ggt$) в~пространстве~$V$ стабилизатор (соотв. стационарная подалгебра) вектора $v\in V$
будет стандартным образом обозначаться через~$G_v$ (соотв. через~$\ggt_v$).

В~работе~\cite{My0} для каждой неразложимой системы простых корней~$\Pi$ некоторым образом определялось подмножество $\pd\Pi\subs\Pi$.

Все неразложимые системы простых корней~$\Pi$, для которых $\pd\Pi\ne\Pi$, перечислены в~\cite[\Ss4,~табл.\,1]{My0} с~указанием подмножества
$\pd\Pi\subs\Pi$. Так, если $\Pi\cong A_r$, то
\equ{
\pd\Pi=\case{
(1,2,r-1,r),&r\ge6;\\
\Pi,&r<6.}}

\subsection{Представления компактных групп}

Допустим, что имеется евклидово пространство~$V$, компактная группа Ли~$G$ с~касательной алгеброй~$\ggt$, линейное представление $G\to\Or(V)$ и~его
дифференциал\т представление $\ggt\cln V$.

Для произвольного вектора $v\in V$ обозначим через~$N_v$ подпространство $(\ggt v)^{\perp}\subs V$. Очевидно, что $\ggt_v=\Lie G_v$ и~$G_vN_v=N_v$
($v\in V$).

\begin{lemma}\label{slice} Если $V/G$\т \ter{гладкое} многообразие, то каждый из факторов $N_v/G_v$, $v\in V$, также является \ter{гладким}
многообразием.
\end{lemma}

\begin{proof} См. лемму~2.3 в~\cite[\Ss2]{My2}.
\end{proof}

\begin{lemma}\label{xiV} Допустим, что $\dim\ggt=1$.
\begin{nums}{-1}
\item Если $V/G$\т многообразие, то для любого вектора $\xi\in\ggt$ имеем $\dim(\xi V)\ne2$.
\item Если $V/G$\т гладкое многообразие, то для любого вектора $\xi\in\ggt$ имеем $\dim(\xi V)\le6$.
\end{nums}
\end{lemma}

\begin{proof} См. следствие~2.3 в~\cite[\Ss2]{My2}.
\end{proof}

\begin{lemma}\label{qV} Если $\ggt\cong\sug_2$, а~$V/G$\т гладкое многообразие, то сумма целых частей половин размерностей всех неприводимых компонент
представления $\ggt\cln V$ \ter{с~учётом кратностей} не превосходит~$4$.
\end{lemma}

\begin{proof} См. теорему~1.1 в~\cite[\Ss1]{My2}.
\end{proof}

\begin{imp}\label{xiV3} Если $\ggt\cong\sug_2$, а~$V/G$\т гладкое многообразие, то для всякого $\xi\in\ggt$ имеем $\dim(\xi V)\le8$.
\end{imp}

\begin{imp}\label{rk1} Если $\rk\ggt=1$, а~$V/G$\т гладкое многообразие, то для всякого $\xi\in\ggt$ имеем $\dim(\xi V)\le\dim\ggt+5$.
\end{imp}

\begin{proof} Вытекает из леммы~\ref{xiV} и~следствия~\ref{xiV3}.
\end{proof}

\begin{imp}\label{rk1x} Если $\rk\ggt=1$, а~$V/G$\т гладкое многообразие, то любой ненулевой вектор $\xi\in\ggt$ удовлетворяет неравенству
$\dim(\xi V)\le\dim[\xi,\ggt]+6$.
\end{imp}

\begin{proof} Поскольку $\rk\ggt=1$, для всякого ненулевого вектора $\xi\in\ggt$ имеем $\Ker(\ad\xi)=\R\xi$, откуда $\dim[\xi,\ggt]=\dim\ggt-1$.
Осталось воспользоваться следствием~\ref{rk1}.
\end{proof}

\begin{lemma}\label{didi} Предположим, что $V/G$\т гладкое многообразие. Для любого вектора $v\in V$, удовлетворяющего равенству $\rk\ggt_v=1$, и~для
любого ненулевого вектора $\xi\in\ggt_v$ имеем $\dim(\xi V)\le\dim[\xi,\ggt]+6$.
\end{lemma}

\begin{proof} Согласно лемме~\ref{slice}, $N_v/G_v$\т гладкое многообразие. Применяя к~представлению $G_v\cln N_v$ следствие~\ref{rk1x}, получаем, что
$\dim(\xi N_v)\le\dim[\xi,\ggt_v]+6$, и, таким образом,
$\dim(\xi V)=\dim\br{\xi(\ggt v)}+\dim(\xi N_v)\le\dim\br{\xi(\ggt v)}+\dim[\xi,\ggt_v]+6=\dim[\xi,\ggt]+6$.
\end{proof}

\begin{lemma}\label{dine} Предположим, что $V/G$\т многообразие. Для любого вектора $v\in V$, такого что $\dim\ggt_v=1$, и~для любого вектора
$\xi\in\ggt_v$ имеем $\dim(\xi V)\ne\dim[\xi,\ggt]+2$.
\end{lemma}

\begin{proof} Согласно лемме~\ref{slice}, $N_v/G_v$\т многообразие. Применяя к~представлению $G_v\cln N_v$ лемму~\ref{xiV}, получаем, что
$\dim(\xi N_v)\ne2$. Далее, $[\xi,\ggt_v]=0$, $\dim\br{\xi(\ggt v)}=\dim[\xi,\ggt]$, откуда
$\dim(\xi V)=\dim\br{\xi(\ggt v)}+\dim(\xi N_v)=\dim[\xi,\ggt]+\dim(\xi N_v)\ne\dim[\xi,\ggt]+2$.
\end{proof}

\begin{lemma}\label{orth} Пусть $v_1\sco v_k\in V$\т произвольные векторы. Если $(\ggt v_i,v_j)=0$ для любых $i,j\in\{1\sco k\}$, $i\ne j$, то
в~подпространстве $\ha{v_1\sco v_k}\subs V$ найдётся вектор со стабилизатором $\capl{i=1}{k}G_{v_i}\subs G$.
\end{lemma}

\begin{proof} Применим индукцию по числу $k\in\N$.

При $k=1$ доказывать нечего.

Докажем утверждение леммы для числа $k\in\N\sm\{1\}$ в~предположении, что для числа $k-1\in\N$ оно уже доказано.

По предположению индукции найдётся вектор $v\in\ha{v_1\sco v_{k-1}}$ со стабилизатором $G_v=\capl{i=1}{k-1}G_{v_i}\subs G$. Согласно условию,
$(\ggt v_i,v_k)=0$ ($i=1\sco k-1$), откуда $(\ggt v,v_k)=0$, $v_k\in N_v$. Значит, для некоторого вектора $v'=v+\ep v_k\in\ha{v_1\sco v_k}\cap N_v$,
$\ep\in\R_{>0}$, имеем $G_{v'}\subs G_v$ и, как следствие, $G_{v'}=G_v\cap G_{\ep v_k}=\capl{i=1}{k}G_{v_i}$.
\end{proof}

\begin{stm}\label{read} Если $G=\SU_m\times\SU_m$ и~$V=\R^d\oplus\glg_m(\Cbb)$ \ter{$d\ge0$, $m>1$}, причём действие $G\cln V$ осуществляется по правилу
$(g_1,g_2)\cln(x,y)\to(x,g_1yg_2^{-1})$ \ter{$g_1,g_2\in\SU_m$, $x\in\R^d$, $y\in\glg_m(\Cbb)$}, то $V/G$\т не многообразие.
\end{stm}

\begin{proof} Положим $v:=E\in\glg_m(\Cbb)\subs V$. Легко видеть, что $G_v=\bc{(g,g)\cln g\in\SU_m}\subs G$, $\ggt v=\sug_m\subs\glg_m(\Cbb)\subs V$
и~$V=\R^d\oplus(\Cbb E)\oplus\sug_m\oplus(i\cdot\sug_m)$. Отсюда следует, что (с~учётом естественного изоморфизма групп $\SU_m$ и~$G_v$) представление
$G_v\cln N_v$ изоморфно прямой сумме тождественного действия $G_v\cln\R^{d+2}$ и~присоединённого представления $\SU_m\cln\sug_m$. Значит,
$N_v/G_v\cong\R^{d+2}\times\R_{\ge0}^{m-1}\cong\R_{\ge0}^{m+d+1}$. Таким образом, фактор $N_v/G_v$ не является многообразием. Согласно лемме~\ref{slice},
не является им и~фактор $V/G$.
\end{proof}

Предположим, что представление $\ggt\cln V$ неприводимо.

Обозначим через~$\ggt_{\Cbb}$ комплексную редуктивную алгебру Ли $\ggt\otimes\Cbb$.

Как легко видеть, найдутся комплексное пространство~$\wt{V}$ и~неприводимое представление $\ggt_{\Cbb}\cln\wt{V}$, такие что представление
$\ggt\cln V$ является овеществлением либо вещественной формой представления $\ggt\cln\wt{V}$.

Положим $\de:=1\in\R$ при $\wt{V}=V\otimes\Cbb$ и~$\de:=2\in\R$ при $\wt{V}=V$.

Фиксируем максимальную коммутативную подалгебру~$\tgt$ алгебры~$\ggt$ и~картановскую подалгебру $\tgt_{\Cbb}:=\tgt\otimes\Cbb$ алгебры~$\ggt_{\Cbb}$.
В~результате возникают система корней $\De\subs\tgt_{\Cbb}^*$ и~её группа Вейля $W\subs\GL(\tgt_{\Cbb}^*)$. Фиксируем систему простых корней
$\Pi\subs\De\subs\tgt_{\Cbb}^*$.

Пусть $\Inn{\La}\subs\tgt_{\Cbb}^*$\т множество весов неприводимого представления $\ggt_{\Cbb}\cln\wt{V}$, $\la\in\tgt_{\Cbb}^*\sm\{0\}$\т старший вес
данного представления относительно системы простых корней $\Pi\subs\De\subs\tgt_{\Cbb}^*$, а~$\La$\т подмножество
$W\la\subs\Inn{\La}\subs\tgt_{\Cbb}^*$. Для комплексного подпространства $\agt\subs\tgt_{\Cbb}$ положим
$\ggt_{\Cbb}(\agt):=\agt\oplus\Br{\opluss{\sst{\al\in\De,\\h_{\al}\in\agt}}(\ggt_{\Cbb})_{\al}}\subs\ggt_{\Cbb}$. Наконец, для подмножества
$\Om\subs\tgt_{\Cbb}^*$ введём обозначения $\tgt_{\Cbb}^{\Om}:=\caps{\om\in\Om}(\Ker\om)\subs\tgt_{\Cbb}$
и~$\ggt^{\Om}:=\ggt_{\Cbb}(\tgt_{\Cbb}^{\Om})\cap\ggt\subs\ggt$. Имеем $\ggt^{\Om_1\cup\Om_2}=\ggt^{\Om_1}\cap\ggt^{\Om_2}\subs\ggt$
($\Om_1,\Om_2\subs\tgt_{\Cbb}^*$).

Рассмотрим произвольный вес $\la'\in\La$.

Для всякого вектора $v\ne0$ (нетривиального) подпространства $\wt{V}_{\la'}\subs\wt{V}$ в~алгебре~$\ggt_{\Cbb}$ подалгебра~$(\ggt_{\Cbb})_v$ совпадает
с~прямой суммой всех подпространств $\Ker\la'\subs\tgt_{\Cbb}$ и~$(\ggt_{\Cbb})_{\al}$ ($\al\in\De$, $\ha{\la'|\al}\ge0$), откуда
$\ggt_v=\ggt^{\{\la'\}}\subs\ggt$.

Если $\wt{V}=V\otimes\Cbb$ и~$2\la'\notin\De\cup(\De+\De)$, то $-\la'\in\La$ и, кроме того,
$(\ggt_{\Cbb}\wt{V}_{\la'})\cap(\ggt_{\Cbb}\wt{V}_{-\la'})=0$, $(\ggt\wt{V}_{\la'})\cap(\ggt\wt{V}_{-\la'})=0$, вследствие чего для всякого ненулевого
вектора~$v$ (нетривиального) подпространства $(\wt{V}_{\la'}\oplus\wt{V}_{-\la'})\cap V\subs V$ имеем $\ggt_v=\ggt^{\{\la'\}}\subs\ggt$.

Из вышесказанного, а~также из леммы~\ref{orth} вытекает следующая лемма.

\begin{lemma}\label{sts} Пусть $\Om\subs\La$\т некоторое подмножество, удовлетворяющее равенству $(\Om-\Om)\cap\De=\es$. Предположим, что выполнено одно
из следующих условий\:
\begin{nums}{-1}
\item $\de=2$\~
\item $\de=1$, $2\la\notin\De\cup(\De+\De)$ и~$(\Om+\Om)\cap\De=\es$.
\end{nums}
Тогда найдётся вектор $v\in V$, такой что $\ggt_v=\ggt^{\Om}\subs\ggt$.
\end{lemma}

Введём обозначение~$\hn{\cdot}$ для порядка произвольного подмножества множества $\Inn{\La}\subs\tgt_{\Cbb}^*$ весов представления
$\ggt_{\Cbb}\cln\wt{V}$ \textit{с~учётом кратностей весов}.

\begin{lemma}\label{bms} Предположим, что $V/G$\т гладкое многообразие. Если $v\in V$, $\rk\ggt_v=1$ и~$\xi\in(\ggt_v\cap\tgt)\sm\{0\}$, то
$\de\cdot\Bn{\bc{\la'\in\Inn{\La}\cln\la'(\xi)\ne0}}\le\Bm{\bc{\al\in\De\cln\al(\xi)\ne0}}+6$.
\end{lemma}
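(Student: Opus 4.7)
The plan is to reduce the inequality directly to Lemma~\ref{didi} by reinterpreting each side of its conclusion as a count of weights and roots, so the assumption $\xi\in\tgt$ (not just $\xi\in\ggt_v$) is used only for this translation.

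Under the given hypotheses one has $\xi\in\ggt_v\sm\{0\}$ and $\rk\ggt_v=1$, so Lemma~\ref{didi} applies verbatim and yields $\dim(\xi V)\le\dim[\xi,\ggt]+6$. The entire remaining work is to identify each of these two dimensions with the combinatorial count in the statement.

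For the right-hand side: because $\xi\in\tgt$, the operator $\ad\xi$ acts diagonally on the root decomposition $\ggt_\Cbb=\tgt_\Cbb\oplus\opluss{\al\in\De}(\ggt_\Cbb)_\al$, with eigenvalue $0$ on $\tgt_\Cbb$ and eigenvalue $\al(\xi)$ on the one-dimensional root space $(\ggt_\Cbb)_\al$. Hence $\dim_\Cbb[\xi,\ggt_\Cbb]=\Bm{\bc{\al\in\De\cln\al(\xi)\ne0}}$, and since $[\xi,\ggt]\otimes\Cbb=[\xi,\ggt_\Cbb]$, this integer equals $\dim_\R[\xi,\ggt]$.

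For the left-hand side: $\xi$ likewise acts diagonally on $\wt V$ via its weight decomposition, with eigenvalue $\la'(\xi)$ on the weight line indexed by $\la'\in\Inn\La$, so $\dim_\Cbb(\xi\wt V)=\Bn{\bc{\la'\in\Inn\La\cln\la'(\xi)\ne0}}$. In Case~1 one has $\wt V=V\otimes\Cbb$ and $\de=1$, so this quantity already equals $\dim_\R(\xi V)$; in Case~2 one has $\wt V=V$ equipped with a $\ggt_\Cbb$-invariant complex structure and $\de=2$, and the real dimension of $\xi V$ on $V$ viewed as a real space is twice its complex dimension. In either case $\dim_\R(\xi V)=\de\cdot\Bn{\bc{\la'\in\Inn\La\cln\la'(\xi)\ne0}}$.

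Plugging these two identifications into $\dim(\xi V)\le\dim[\xi,\ggt]+6$ delivers the desired estimate. There is no substantive obstacle here: the content of this lemma is exactly the translation from dimensions to weight/root counts, with the constant $\de$ defined precisely so as to absorb the real-versus-complex discrepancy between the two cases for~$\wt V$.
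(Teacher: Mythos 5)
Your argument is correct and is exactly the paper's proof: the paper also derives the estimate from Lemma~\ref{didi} via the identities $\de\cdot\dim_{\Cbb}(\xi\wt{V})=\dim(\xi V)$ and $\dim[\xi,\ggt]=\dim_{\Cbb}[\xi,\ggt_{\Cbb}]$, merely leaving the weight/root-counting interpretation of these dimensions implicit. Your only cosmetic imprecision is calling the weight spaces of $\wt{V}$ \emph{lines}; they may have higher multiplicity, but the count $\hn{\cdot}$ is defined with multiplicities, so the identification $\dim_{\Cbb}(\xi\wt{V})=\bn{\{\la'\in\Inn{\La}\cln\la'(\xi)\ne0\}}$ stands.
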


\begin{proof} В~силу леммы~\ref{didi}, $\de\cdot\dim_{\Cbb}(\xi\wt{V})=\dim(\xi V)\le\dim[\xi,\ggt]+6=\dim_{\Cbb}[\xi,\ggt_{\Cbb}]+6$, что влечёт
требуемое.
\end{proof}

\begin{imp}\label{nosm} Пусть $\Om\subs\La$\т произвольное подмножество. Обозначим через~$H$ подпространство $\ha{\Om}_{\Cbb}\subs\tgt_{\Cbb}^*$. Если
\begin{gather*}
\dim_{\Cbb}(\tgt_{\Cbb}^*/H)=1;\quad\quad\de\cdot\bn{\Inn{\La}\sm H}>|\De\sm H|+6;\quad\quad(\Om-\Om)\cap\De=\es;\\
(\de=1)\quad\Ra\quad\Br{\br{2\la\notin\De\cup(\De+\De)}\nd\br{(\Om+\Om)\cap\De=\es}},
\end{gather*}
то фактор $V/G$ не является гладким многообразием.
\end{imp}

\begin{proof} Вытекает из лемм \ref{sts} и~\ref{bms}.
\end{proof}

\begin{lemma}\label{bms2} Допустим, что фактор $V/G$ является многообразием и~что $\Inn{\La}=\La$. Если $v\in V$, $\xi\in\tgt\sm\{0\}$
и~$\ggt_v=\R\xi$, то $\de\cdot\Bm{\bc{\la'\in\La\cln\la'(\xi)\ne0}}\ne\Bm{\bc{\al\in\De\cln\al(\xi)\ne0}}+2$.
\end{lemma}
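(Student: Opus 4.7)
The plan is to mimic the proof of Lemma~\ref{bms} almost verbatim, replacing the appeal to Lemma~\ref{didi} by Lemma~\ref{dine}. The hypotheses $\ggt_v=\R\xi$ and $\xi\ne0$ imply $\dim\ggt_v=1$, which is exactly the setup of Lemma~\ref{dine}; applied to~$v$ and~$\xi$ it gives
\equ{\dim(\xi V)\ne\dim[\xi,\ggt]+2.}

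Next I rewrite both sides via the root and weight decompositions. The adjoint action of~$\xi$ on $\ggt_{\Cbb}=\tgt_{\Cbb}\oplus\opluss{\al\in\De}(\ggt_{\Cbb})_{\al}$ is zero on $\tgt_{\Cbb}$ and is multiplication by $\al(\xi)$ on $(\ggt_{\Cbb})_{\al}$, so
\equ{\dim[\xi,\ggt]=\dim_{\Cbb}[\xi,\ggt_{\Cbb}]=\Bm{\bc{\al\in\De\cln\al(\xi)\ne0}}.}
Similarly, the action of~$\xi$ on $\wt{V}=\opluss{\la'\in\Inn{\La}}\wt{V}_{\la'}$ is multiplication by $\la'(\xi)$ on $\wt{V}_{\la'}$, and by the definition of~$\de$ one has $\dim(\xi V)=\de\cdot\dim_{\Cbb}(\xi\wt{V})$, where $\dim_{\Cbb}(\xi\wt{V})$ equals the sum of $\dim_{\Cbb}\wt{V}_{\la'}$ over those $\la'\in\Inn{\La}$ with $\la'(\xi)\ne0$.

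The last ingredient is the hypothesis $\Inn{\La}=\La$: every weight of $\wt{V}$ lies in the Weyl orbit $W\la$, so every weight is extreme and occurs with multiplicity exactly~$1$. Hence $\dim_{\Cbb}\wt{V}_{\la'}=1$ for all $\la'\in\La$, and the above sum equals $\Bm{\bc{\la'\in\La\cln\la'(\xi)\ne0}}$. Substituting into the inequality from Lemma~\ref{dine} yields the desired conclusion $\de\cdot\Bm{\bc{\la'\in\La\cln\la'(\xi)\ne0}}\ne\Bm{\bc{\al\in\De\cln\al(\xi)\ne0}}+2$.

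There is essentially no obstacle here; the argument is a direct transcription of the proof of Lemma~\ref{bms} with Lemma~\ref{didi} replaced by Lemma~\ref{dine}, and the multiplicity-one conclusion supplied by the minuscule-type hypothesis $\Inn{\La}=\La$. The only point requiring care is the correct insertion of the factor~$\de$ in both cases $\wt{V}=V$ and $\wt{V}=V\otimes\Cbb$, which is precisely what the definition of~$\de$ is designed to encode.
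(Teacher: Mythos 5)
Your proof is correct and follows the same route as the paper: the paper's own proof of Lemma~\ref{bms2} is the one-line chain $\de\cdot\dim_{\Cbb}(\xi\wt{V})=\dim(\xi V)\ne\dim[\xi,\ggt]+2=\dim_{\Cbb}[\xi,\ggt_{\Cbb}]+2$ deduced from Lemma~\ref{dine}, exactly as you propose. Your additional remarks on the weight decomposition and on the multiplicity-one property of the extreme weights under the hypothesis $\Inn{\La}=\La$ merely spell out what the paper leaves implicit.
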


\begin{proof} В~силу леммы~\ref{dine}, $\de\cdot\dim_{\Cbb}(\xi\wt{V})=\dim(\xi V)\ne\dim[\xi,\ggt]+2=\dim_{\Cbb}[\xi,\ggt_{\Cbb}]+2$, что влечёт
требуемое.
\end{proof}

\begin{imp}\label{nom} Пусть $\Om\subs\La$\т произвольное подмножество. Обозначим через~$H$ подпространство $\ha{\Om}_{\Cbb}\subs\tgt_{\Cbb}^*$. Если
\equ{\begin{array}{c}
\begin{array}{cc}
\dim_{\Cbb}(\tgt_{\Cbb}^*/H)=1;\quad\quad&\Inn{\La}=\La;\\
\de\cdot|\La\sm H|=|\De\sm H|+2;\quad\quad&(\Om-\Om)\cap\De=H^{\perp}\cap\De=\es;
\end{array}\\
(\de=1)\quad\Ra\quad\Br{\br{2\la\notin\De\cup(\De+\De)}\nd\br{(\Om+\Om)\cap\De=\es}},
\end{array}}
то $V/G$\т не многообразие.
\end{imp}

\begin{proof} Вытекает из лемм \ref{sts} и~\ref{bms2}.
\end{proof}

\subsection{Полярные представления}

\begin{df} Представление компактной группы Ли~$G$ в~вещественном пространстве~$V$ называется \textit{полярным}, если существует подпространство
$V'\subs V$, удовлетворяющее соотношениям $GV'=V$ и~$V'\cap\br{T_v(Gv)}=0$ ($v\in V'$).
\end{df}

\begin{lemma}\label{pol} Факторпространство произвольной нетривиальной связной компактной полярной линейной группы гомеоморфно замкнутому
полупространству.
\end{lemma}

\begin{proof} Согласно результатам работы~\cite{CD} (см. теоремы 2.8, 2.9 и~2.10 в~\Ss2), рассматриваемое факторпространство гомеоморфно
факторпространству нетривиальной конечной линейной группы, порождённой (вещественными) отражениями.
\end{proof}

Пусть $G$\т связная односвязная компактная полупростая группа Ли, $\ggt$\т её касательная алгебра, а~$\ta\in\Aut(G)$\т нетривиальный инволютивный
автоморфизм. Далее, рассмотрим присоединённое действие $G^{\ta}\cln\ggt$ и~его ограничение на (очевидно, инвариантное) подпространство
$\ggt^{-d\ta}\subs\ggt$.

\begin{lemma}\label{wey} Фактор $\ggt^{-d\ta}/G^{\ta}$ гомеоморфен замкнутому полупространству.
\end{lemma}

\begin{proof} Согласно теореме~B в~\cite[\Ss1]{Rash}, подгруппа Ли $G^{\ta}\subs G$ связна. Ввиду полупростоты группы Ли~$G$, представление
$G^{\ta}\cln\ggt^{-d\ta}$ нетривиально. Данное представление является также полярным (см.~\cite[\Ss8,~пп.\,8.5---8.6]{T55}). Осталось применить
лемму~\ref{pol}.
\end{proof}

\begin{note} Утверждение леммы~\ref{wey} следует также из результатов работы~\cite{Vin}.
\end{note}

\subsection{Комбинаторные неравенства}

Для натуральных чисел $n,k,m_1\sco m_k$, таких что $m_1\spl m_k=n$, положим
\equ{
\rbmat{n\\m_1\sco m_k}:=\rbmat{n\\m_1}\cdot\rbmat{n-m_1\\m_2}\sd\rbmat{n-m_1\smn m_{k-2}\\m_{k-1}}=\frac{n!}{m_1!\sd m_k!}\in\N.}

\begin{stm}\label{cmb1} Если $n,k,m_1\sco m_k\in\N$ и~$m_1\spl m_k=n$, то неравенство $\rbmat{n\\m_1\sco m_k}<n$ возможно только при $k=1$ и~$m_1=n$.
\end{stm}

\begin{proof} Имеем $\rbmat{n\\m_1}\le\rbmat{n\\m_1\sco m_k}<n$, откуда $m_1=n$.
\end{proof}

\begin{stm}\label{cmb2} Если $n,k,m_1\sco m_k\in\N$, $k>1$ и~$m_1\spl m_k=n$, то неравенство
$\rbmat{n\\m_1\sco m_k}\le2(n-1)$ может выполняться только в~следующих случаях\:
\begin{nums}{-1}
\item $k=2$, $\{m_1,m_2\}=\{1,n-1\}$\~
\item $n=4$, $k=2$, $m_1=m_2=2$.
\end{nums}
\end{stm}

\begin{proof} Допустим, что $k>2$. Тогда $0<m_2<n-m_1$, $0<m_1<n-m_2\le n-1$. Поэтому $\rbmat{n-m_1\\m_2}\ge n-m_1$, а~также
$\rbmat{n-1\\m_1}\ge n-1$. Следовательно,
\equ{
2(n-1)\ge\rbmat{n\\m_1\sco m_k}\ge\rbmat{n\\m_1}\cdot\rbmat{n-m_1\\m_2}\ge\rbmat{n\\m_1}\cdot(n-m_1)=n\cdot\rbmat{n-1\\m_1}\ge n(n-1),}
$(n-2)(n-1)\le0$, $n\le2<k\le m_1\spl m_k=n$. Получили противоречие.

Теперь предположим, что $k=2$ и~$\{m_1,m_2\}\ne\{1,n-1\}$.

Имеем $m_1,m_2\ne1$, $m_1,m_2\ge2$, $\rbmat{n\\m_1}\ge\rbmat{n\\2}$. Кроме того, $n=m_1+m_2\ge4$. Значит,
$2(n-1)\ge\rbmat{n\\m_1,m_2}=\rbmat{n\\m_1}\ge\rbmat{n\\2}=\frac{n(n-1)}{2}\ge\frac{4(n-1)}{2}=2(n-1)$, откуда, во-первых,
$n=4$, а~во-вторых, $\rbmat{n\\m_1}=\rbmat{n\\2}$, $\rbmat{4\\m_1}=\rbmat{4\\2}$, $m_1=m_2=2$.
\end{proof}

\subsection{Орбиты группы Вейля}

Пусть $r>1$\т натуральное число. Положим $n:=r+1\in\N$. Имеем $n\ge3$.

Рассмотрим евклидово пространство~$\R^n$, в~котором стандартный базис $\{\ep_i\}_{i=1}^n$ является ортонормированным, неразложимую систему корней
$\De:=\{\ep_i-\ep_j\cln i\ne j\}\subs\R^n$ типа~$A_r$, её группу Вейля $W\subs\Or(\R^n)$, камеру Вейля $C:=\{x\in\R^n\cln x_1\sge x_n\}\subs\R^n$
и~соответствующую ей систему простых корней $\Pi:=\{\al_1\sco\al_r\}\subs\De$ ($\al_i:=\ep_i-\ep_{i+1}$, $i=1\sco r$).

Решётки $P:=\bc{\la\in\ha{\De}\cln\ha{\la|\al}\in\Z\,\fa\al\in\De}$ и~$Q:=\ha{\De}_{\Z}$ подпространства $\ha{\De}\subs\R^n$ удовлетворяют соотношениям
$WP=P$, $WQ=Q$, $Q\subs P$ и~$|P/Q|=n<\bes$.

Пусть $\ph_1\sco\ph_r\in P$\т фундаментальные веса относительно системы простых корней $\Pi=\{\al_1\sco\al_r\}\subs\De$ (с~учётом нумерации).
Очевидно, что $\{\ph_i\}_{i=1}^r$\т базис решётки $P\subs\ha{\De}$.

Для краткости орбиты действия $W\cln P$ будем называть просто \textit{орбитами}.

Пусть $\La\subs P$\т произвольная орбита. Положим $\Inn{\La}:=\conv(\La)\cap(\La+Q)\subs P$. Ясно, что
\begin{nums}{-1}
\item $|\La\cap C|=1$\~
\item $\La-\La\subs Q$\~
\item $\Inn{\La}\sups\La$\~
\item $W\Inn{\La}=\Inn{\La}$\~
\item для любой орбиты $\La'\subs\Inn{\La}$ имеем $\Inn{\La}{}'\subs\Inn{\La}$\~
\item для любой орбиты $\La'\subs\Inn{\La}$, $\La'\ne\La$, имеем $\Inn{\La}{}'\cap\La=\es$.
\end{nums}

Фиксируем орбиту $\La\subs P\sm\{0\}$.

Имеем $\La\cap C=\{\la\}$, где $\la\in(P\cap C)\sm\{0\}$.

\subsubsection{Основные утверждения}

Пусть $\Pi'\subs\Pi\subs\R^n$\т неразложимая система простых корней порядка $r-2$.

Легко видеть, что $H:=\ba{\{\la\}\cup\Pi'}\subs\ha{\De}\subs\R^n$\т $(r-1)$\д мерное подпространство.

Данный пункт посвящён доказательству нижеследующих лемм \ref{lah} и~\ref{lah1}.

\begin{lemma}\label{lah} Если $\la\notin\De$, $\la\ne\ph_1\sco\ph_r$ и
\equ{
\bgm{\Inn{\La}\sm H}\le4n;\quad(-\La=\La)\lor\Br{\bgm{\Inn{\La}\sm H}\le2n},}
то $\la\in\{2\ph_1,2\ph_r,\ph_1+\ph_{r-1},\ph_2+\ph_r\}$ либо $(r=3)\nd(\la=2\ph_2)$.
\end{lemma}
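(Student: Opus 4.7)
The plan is to derive a lower bound on $\bgm{\Inn{\La}\sm H}$ from the combinatorics of the Weyl orbit $W\la\subs\Inn{\La}$ and to show that this bound contradicts the hypotheses whenever $\la$ lies outside the exceptional list.

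Write $\la=(a_1\sco a_n)$ with $a_1\sge a_n$ and $\sums{i}a_i=0$, and let $m_1\sco m_k$ be the multiplicities of its distinct coordinate values; then $\bgm{W\la}=\rbmat{n\\m_1\sco m_k}$. The hypotheses $\la\notin\De$ and $\la\ne\ph_1\sco\ph_r$ rule out the highest root $\ph_1+\ph_r$ together with all fundamental weights (the two-value shapes $(j,n-j)$). For each choice $\Pi'=\Pi\sm\{\al_p,\al_q\}$ with $1\le p<q\le r$, a direct computation in the $\ep$-coordinates yields $\ha{\Pi'}=\bc{x\in\ha{\De}\cln\sums{i\le p}x_i=\sums{i\le q}x_i=0}$; thus $H=\ha{\Pi'}+\R\la$ is cut out in $\ha{\De}$ by a single linear relation.

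Consequently, $\mu\in W\la$ belongs to $H$ iff the pair $\br{\sums{i\le p}\mu_i,\sums{i\le q}\mu_i}$ is proportional to $\br{\sums{i\le p}\la_i,\sums{i\le q}\la_i}$, so $|W\la\cap H|$ equals the number of $\si\in S_n$ for which the first two partial sums of $\si(\la)$ satisfy this prescribed scalar relation. The argument is then completed by case analysis on the shape $(m_1\sco m_k)$ of $\la$. Statements~\ref{cmb1}--\ref{cmb2} delineate the shapes with small orbit; combining these bounds with the exclusion of roots and fundamental weights leaves exactly the candidates appearing in the conclusion. For every other $\la$ the orbit $W\la$ is sufficiently large that, for any admissible $(p,q)$, the number of $\si\in S_n$ whose partial-sum pair misses the forbidden one-parameter family exceeds $4n$, and, in the non-self-dual case, even $2n$.

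\textbf{Main obstacle.} The technical heart of the proof is the borderline counting: for candidates close to the exception list (e.g.\ $\la=3\ph_1$, $\la=\ph_1+\ph_2$, or $\la=2\ph_1+\ph_r$) one must verify that, however $\Pi'$ is chosen, too few permutations $\si\in S_n$ produce proportional partial sums to permit $\bgm{W\la\sm H}\le4n$; this rests on careful use of the multinomial estimates of~\ref{cmb1}--\ref{cmb2}. The dichotomy $4n$ vs.\ $2n$ reflects the Dynkin involution $\ph_i\leftrightarrow\ph_{r+1-i}$, which induces the duality $\La\leftrightarrow-\La$: the pairs $\{2\ph_1,2\ph_r\}$ and $\{\ph_1+\ph_{r-1},\ph_2+\ph_r\}$ appear together and are caught by the $4n$ bound, whereas the self-dual candidate $\la=2\ph_2$ at $r=3$ is singled out only by the sharper $2n$ threshold available for self-dual orbits.
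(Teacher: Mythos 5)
Your reduction of the membership condition $\mu\in H$ to a proportionality of partial sums is correct (for the three connected choices of $\Pi'$ that actually occur it collapses to the paper's description $H=\bc{x\cln x_q=cx_p}$ with $(p,q)\in\{(1,2),(1,n),(n,n-1)\}$), but the core of your plan\� deriving the required lower bounds from the size of the Weyl orbit $W\la$ via the multinomial estimates of Statements \ref{cmb1}--\ref{cmb2}\� cannot work, because the hypothesis and conclusion of the lemma concern $\Inn{\La}=\conv(\La)\cap(\La+Q)$ and not $\La=W\la$ alone, and for many of the weights that must be excluded the orbit itself is far too small ever to violate the bound $4n$. The clearest example is one you yourself list as a ``borderline'' case: for $\la=3\ph_1$ the coordinate shape is $(1,n-1)$, so $\bgm{W\la}=n$ and a fortiori $\bgm{(W\la)\sm H}\le n<2n$; no count of permutations with non-proportional partial sums can exceed $4n$ here. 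The same happens for every $m\ph_1$, $m\ph_r$ \ter{$m\ge3$} and $m\ph_2$ at $n=4$; more generally, Statement \ref{cmb2} constrains only the multiplicity shape $(m_1\sco m_k)$ and therefore cannot distinguish $2\ph_1$ from $3\ph_1$, so your case analysis would leave infinitely many candidates rather than ``exactly the candidates appearing in the conclusion''. Even for three-valued shapes such as $2\ph_1+\ph_r$ the orbit-only estimate tops out at $2n-2$ \ter{cf.\ Proposition \ref{est}}, which does not contradict $\bgm{\Inn{\La}\sm H}\le2n$.

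The missing idea is the one the paper's argument is built on: whenever $\ha{\la|\al}>1$ for some $\al\in\De$, the weight system contains the smaller dominant weight $\la-\al$ together with its whole inner system, giving $\Inn{\La}\sups\La\sqcup\Inn{\La''}$ with $\La''=W(\la-\al)$; iterating this (in effect an induction on $\la_1-\la_n$) is what yields the bounds $\ge2n$, $>2n$ and $>4n$ of Propositions \ref{esn}, \ref{es2n}, Corollary \ref{ampl} and Proposition \ref{eson}, from which Lemma \ref{lah} is deduced. A smaller point: you have the $4n$/$2n$ dichotomy backwards. The hypothesis $(-\La=\La)\lor\br{\bgm{\Inn{\La}\sm H}\le2n}$ imposes the sharper bound $2n$ precisely when $\La$ is \emph{not} self-dual; the self-dual exception $2\ph_2$ \ter{$r=3$} only has to survive the $4n$ bound (Proposition \ref{eson}), while the non-self-dual exceptions $2\ph_1$, $2\ph_r$, $\ph_1+\ph_{r-1}$, $\ph_2+\ph_r$ are the ones that survive the $2n$ bound.
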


\begin{lemma}\label{lah1} Предположим, что $r\ge8$, $\la=\ph_j$, где $j\in\N$ и~$3\le j\le r-2$, а~также $\Pi'=\{\al_1\sco\al_{r-2}\}\subs\Pi$. Тогда
$\bgm{\Inn{\La}\sm H}>4n$.
\end{lemma}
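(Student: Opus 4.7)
\medskip

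The plan is to identify $\Inn{\La}$ with $\La$ itself via the minuscule property of $\ph_j$, describe $H$ explicitly as a coordinate hyperplane in $\ha{\De}$, and then reduce to an elementary binomial inequality.

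First I would observe that $\la = \ph_j$ is a fundamental weight of type $A_r$ and hence minuscule: all weights of the irreducible representation with highest weight $\ph_j$ lie in the single $W$-orbit $W\ph_j$. Since $W\ph_j \subs \ph_j + Q$, we have $\La + Q = \ph_j + Q$, and the standard description of the weight polytope gives $\Inn{\La} = \conv(W\ph_j) \cap (\ph_j + Q) = W\ph_j = \La$. Explicitly, $\ph_j$ has $j$ coordinates equal to $(n-j)/n$ and $n-j$ coordinates equal to $-j/n$, so $|\La| = \binom{n}{j}$.

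Next I would describe $H$. Each simple root $\al_i = \ep_i - \ep_{i+1}$ with $i \le r-2$ involves only the coordinates $\ep_1, \ldots, \ep_{r-1}$, so the span of $\Pi'$ in $\ha{\De}$ is exactly $\{x \in \ha{\De} : x_r = x_n = 0\}$. Since $j \le r-2 < r$, both coordinates of $\ph_j$ at positions $r$ and $n$ equal $-j/n$, and adjoining $\ph_j$ enlarges this span to the hyperplane $H = \{x \in \ha{\De} : x_r = x_n\}$. An element of $\La$, being a permutation of the entries of $\ph_j$, lies in $H$ iff its entries at positions $r$ and $n$ coincide; this yields $\binom{n-2}{j-2}$ permutations where both equal $(n-j)/n$ and $\binom{n-2}{j}$ where both equal $-j/n$.

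By two applications of Pascal's rule $\binom{n}{j} = \binom{n-2}{j-2} + 2\binom{n-2}{j-1} + \binom{n-2}{j}$, so $|\Inn{\La} \sm H| = 2\binom{n-2}{j-1}$. The desired inequality $2\binom{n-2}{j-1} > 4n$ reduces to $\binom{n-2}{j-1} > 2n$. On the range $2 \le j-1 \le n-4$ the binomial $\binom{n-2}{j-1}$ attains its minimum at the endpoints (both giving $\binom{n-2}{2}$ by the symmetry of binomials around $(n-2)/2$), so it suffices to verify $(n-2)(n-3)/2 > 2n$, i.e.\ $n^2 - 9n + 6 > 0$, which holds for all $n \ge 9$, i.e.\ $r \ge 8$.

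The main conceptual step is the identification $\Inn{\La} = \La$ via the minuscule property of fundamental weights in type $A_r$; the remaining combinatorial identity and quadratic estimate are routine, and the hypothesis $r \ge 8$ is precisely the threshold at which the quadratic inequality first turns positive.
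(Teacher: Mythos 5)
Your proof is correct and follows essentially the same route as the paper: describe $H$ as the hyperplane $x_{n-1}=x_n$ in $\ha{\De}$, count $\bgm{\La\sm H}=2\binom{n-2}{j-1}$, and bound this below by $2\binom{n-2}{2}=(n-2)(n-3)>4n$ using $2\le j-1\le n-4$ and $r\ge8$. The only cosmetic difference is that you establish the exact equality $\Inn{\La}=\La$ via minusculity of $\ph_j$, whereas the paper only needs the inclusion $\Inn{\La}\sups\La$.
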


Имеем $\la\in\br{\ha{\De}\cap C}\sm\{0\}$, $\la_1>0>\la_n$, откуда
\equ{
\ha{\Pi'}=\bc{x\in\ha{\De}\cln x_p=x_q=0},\quad H=\bc{x\in\ha{\De}\cln x_q=cx_p},}
где $(p,q)\in\{1\sco n\}^2$\т одна из пар $(1,2)$, $(1,n)$ и~$(n,n-1)$ (и, таким образом, $\la_p\ne0$), а~$c$\т число $\frac{\la_q}{\la_p}\in\R$.

Мы начнём с~доказательства леммы~\ref{lah}, предварительно доказав несколько вспомогательных утверждений.

Пусть $x\in P\cap C$\т некоторый вектор.

Положим $K:=\{x_1\sco x_n\}\subs\R$, $k:=|K|\in\N$, $m(t):=\Bm{\bc{i\in\{1\sco n\}\cln x_i=t}}\in\N$ ($t\in K$)
и~$m_0:=\max\bc{m(t)}_{t\in K}\in\N$.

\begin{prop}\label{est} Допустим, что $k\ge3$. Тогда
\begin{nums}{-1}
\item $\bgm{(Wx)\sm H}\ge2n-2$\~
\item если $m_0\ne n-2$, то $\bgm{(Wx)\sm H}\ge5(n-2)$\~
\item если $k\ge4$, то $\bgm{(Wx)\sm H}\ge4n$.
\end{nums}
\end{prop}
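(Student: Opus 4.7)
The plan is to compute $|Wx\cap H|$ exactly via a symmetry argument and then to dispatch the three bounds by combining combinatorial lower bounds on $|Wx|$ with a universal upper bound on the intersection count. By symmetry of the orbit under position permutations, for any ordered pair of distinct positions $(p,q)$ and any values $a,b\in K$ the number of $y\in Wx$ with $y_p=a$, $y_q=b$ equals $\frac{|Wx|}{n(n-1)}\,m(a)\bigl(m(b)-[a=b]\bigr)$. Summing over pairs with $b=ca$ yields
\[
|Wx\cap H|=\frac{|Wx|\cdot T}{n(n-1)},\qquad T:=\sum_{\substack{a\in K\\ ca\in K}} m(a)\bigl(m(ca)-[a=ca]\bigr),
\]
so that $|Wx\setminus H|=|Wx|\bigl(n(n-1)-T\bigr)/\bigl(n(n-1)\bigr)$. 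The crude bound $m(ca)-[a=ca]\le m_0$ gives $T\le m_0 n$, whence the universal estimate $|Wx\setminus H|\ge|Wx|(n-1-m_0)/(n-1)$.

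\textbf{Combinatorial lower bounds on $|Wx|$.} From Statement~\ref{cmb2} applied to $|Wx|=\rbmat{n\\m_1\sco m_k}$ one extracts three inequalities: $|Wx|\ge n(n-1)$ when $k\ge 3$, with equality iff the multiplicity pattern is $(n-2,1,1)$; $|Wx|\ge\tfrac12 n(n-1)(n-2)$ when $k\ge 3$ and $m_0\le n-3$ (minimum attained at $(n-3,2,1)$); and $|Wx|\ge n(n-1)(n-2)$ when $k\ge 4$ (minimum attained at $(n-3,1,1,1)$).

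\textbf{Assembly of the three cases.} For (iii), $k\ge 4$ forces both $n\ge 4$ and $m_0\le n-3$, so $|Wx\setminus H|\ge 2|Wx|/(n-1)\ge 2n(n-2)\ge 4n$. For (ii) with $n\ge 5$ the universal estimate gives $|Wx\setminus H|\ge n(n-2)\ge 5(n-2)$; the remaining case $n=4$ together with $k\ge 3$ and $m_0\le 1$ forces $m=(1,1,1,1)$ and reduces to~(iii). For (i) with $m_0\le n-3$, the hypothesis forces $n\ge 4$, so (ii) applies and $5(n-2)\ge 2(n-1)$.

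\textbf{The tight subcase and main obstacle.} What remains is (i) in the extremal configuration $k=3$, $m=(n-2,1,1)$, where $|Wx|=n(n-1)$ and the universal estimate degenerates to $|Wx\setminus H|\ge n$, which is too weak. Here one analyses $T$ directly in cases on $c$: for $c=1$, $T=(n-2)(n-3)$ and $|Wx\setminus H|=4n-6$; for $c=0$, $|Wx\cap H|=m(0)|Wx|/n$, giving $|Wx\setminus H|\ge 2(n-1)$ with equality when $0\in K$ and $m(0)=n-2$ --- which shows the bound in~(i) to be sharp; for $c\ne 0,1$, one enumerates the orbits of $a\mapsto ca$ on the three-element set $K$, separating $0\in K$ from $0\notin K$ and using that $c^l=1$ with $l\ge 2$ over $\R$ forces $c=\pm 1$, which rules out 3-cycles of $\tau$ and tightly restricts 2-cycles. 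This yields $T\le\max\{2(n-2),(n-2)(n-3)+2\}$ and hence $|Wx\setminus H|\ge\min\{n^2-3n+4,\,4n-8\}\ge 2(n-1)$ for $n\ge 3$. The principal obstacle of the whole proof is exactly this last step: since the bound $2n-2$ is attained, no slack is available, and the enumeration of orbits of the multiplication-by-$c$ map on $K$ must be executed carefully in each subcase.
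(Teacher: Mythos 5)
Your argument is correct, but it follows a genuinely different route from the paper's. The paper works with the set $L$ of value-pairs $(t',t'')$ realizable as $(y_p,y_q)$ for $y\in Wx\setminus H$, bounds $|L|$ from below ($|L|\ge k(k-2)$, with the refinements $|L|\ge4$ and $|L|\ge5$ unless $m_0=n-2$), and bounds each fibre $d(\tau)$ from below by $n-2$ via Statement~\ref{cmb1}, keeping track of at most two exceptional fibres; all three estimates then drop out of one inequality $|Wx\setminus H|\ge(n-2)(|L|-l)+l$. You instead compute $|Wx\cap H|$ exactly by the averaging identity $|Wx\cap H|=|Wx|\,T/(n(n-1))$ and combine the resulting universal bound $|Wx\setminus H|\ge|Wx|(n-1-m_0)/(n-1)$ with lower bounds on the orbit size. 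This disposes of parts~2) and~3) more transparently than the paper does, at the cost of (a) needing the orbit-size bounds $|Wx|\ge n(n-1)$, $\ge\frac12n(n-1)(n-2)$, $\ge n(n-1)(n-2)$ in the respective regimes --- these do \emph{not} follow from the statement of~\ref{cmb2}, only from the telescoping technique in its proof, so you should prove them explicitly --- and (b) a separate hands-on analysis of the tight configuration $m=(n-2,1,1)$ for part~1), where your enumeration of the orbits of $t\mapsto ct$ on the three-element set $K$ (using that $c^l=1$, $l\ge2$, forces $c=\pm1$ over $\R$) plays the same role as the paper's argument that the map $t\mapsto ct$ cannot be a fixed-point-free permutation of odd order on $K$. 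Your identification of where equality $2n-2$ occurs ($c=0$, $m(0)=n-2$) is a small bonus the paper does not make explicit. Net assessment: correct and complete in outline; fill in the orbit-size lemmas and write out the $c\ne0,1$ case division in full.
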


\begin{proof} Положим $L(t'):=\BC{t\"\in K\sm\{ct'\}\cln\br{m(t')=1}\Ra(t\"\ne t')}\subs K$ ($t'\in K$), а~также
$L:=\bc{(t',t\")\in K^2\cln t\"\in L(t')}\subs K^2$ и~$K_0:=\bc{t\in K\cln ct\in K\sm\{t\},m(t)=1}\subs K$. Ясно, что
\eqn{\label{dt}
\begin{split}
\fa\tau\in L\quad\quad\quad&d(\tau):=\Bm{\bc{y\in Wx\cln(y_p,y_q)=\tau}}\ge1;\\
&\bgm{(Wx)\sm H}=\sums{\tau\in L}d(\tau);\\
\fa t\in K\quad\quad\quad&\bgm{L(t)}\ge k-2;\\
\fa t\in K\sm K_0\quad\quad\quad&\bgm{L(t)}>k-2.
\end{split}}
Отсюда $|L|\ge k(k-2)+\br{k-|K_0|}\ge k(k-2)$, $|K_0|\ge k(k-2)+k-|L|=k(k-1)-|L|$.

Покажем, что
\eqn{\label{L4}
\begin{array}{c}
|L|\ge4;\\
(|L|=4)\quad\Ra\quad(m_0=n-2).
\end{array}}

Допустим, что $|L|\le4$. Тогда $k(k-2)\le|L|\le4<4\cdot(4-2)$, $k=3$, $|K_0|\ge6-|L|\ge2$, $\Bm{\bc{t\in K\cln m(t)=1}}\ge|K_0|\ge2$. Кроме того,
$n\ge|K|=k=3$. Значит, $m_0=n-2$. Если при этом $|L|\le3$, то $|K_0|\ge6-|L|\ge3=|K|$, $K_0=K$,
\begin{gather*}
\fa t\in K\\
ct\in K\sm\{t\},\quad ct\ne t,\quad t\ne0,\quad c\ne1,\quad c^3\ne1,\quad c^3t\ne t;\\
K\subs\R\sm\{0\};\\
\fa t\in K\quad\quad\quad ct\in K\sm\{t\}\subs\R\sm\{0\},\quad c\ne0,
\end{gather*}
и~тем самым возникает биективное отображение $K\to K,\,t\to ct$, третья степень которого не имеет неподвижных точек, в~то время как $|K|=3$. Получили
противоречие.

Тем самым нами установлены соотношения~\eqref{L4}.

Покажем, что
\eqn{\label{ln}
\begin{array}{c}
l:=\Bm{\bc{\tau\in L\cln d(\tau)<n-2}}\le2;\\
(l>0)\quad\Ra\quad(m_0=n-2).
\end{array}}

Фиксируем число $t_0\in K$, для которого $m(t_0)=m_0$ (такое существует). Кроме того, положим
$L_0:=\bc{(t',t\")\in K^2\cln t',t\"\ne t_0,t'\ne t\"}\subs K^2$.

Пусть $(t',t\")\in L$\т произвольная пара, такая что $d(t',t\")<n-2$. В~силу утверждения~\ref{cmb1} и~неравенства $k\ge3$, имеем $k=3$,
$K=\{t,t',t\"\}\subs\R$, $t'\ne t$, $t\"\ne t$, $t'\ne t\"$, $m(t)=n-2>d(t',t\")\ge1=m(t')=m(t\")$, $m_0=n-2$, $t_0=t\in K\sm\{t',t\"\}$,
$(t',t\")\in L_0$, причём $|L_0|=(k-1)(k-2)=2$.

Тем самым мы установили соотношения~\eqref{ln}.

Ясно, что $n\ge|K|=k\ge3$. Теперь, пользуясь \eqref{dt}---\eqref{ln}, получаем, что
\begin{gather*}
\bgm{(Wx)\sm H}\ge(n-2)\cdot\br{|L|-l}+l=(n-2)\cdot|L|-(n-3)l=\\
=4(n-2)-2(n-3)+(n-2)\cdot\br{|L|-4}+(n-3)(2-l)\ge\\
\ge4(n-2)-2(n-3)=2n-2,
\end{gather*}
причём в~случае $m_0\ne n-2$ имеем $|L|\ge5$ и~$l=0$, откуда
\equ{
\bgm{(Wx)\sm H}\ge(n-2)\cdot|L|-(n-3)l=(n-2)\cdot|L|\ge5(n-2).}
Допустим, что $k\ge4$. Тогда $n\ge k\ge4$, $|L|\ge k(k-2)\ge8$, а~также $m_0\le n-(k-1)<n-2$, и~поэтому
$\bgm{(Wx)\sm H}\ge(n-2)\cdot|L|\ge8(n-2)=4n+4(n-4)\ge4n$.

Это завершает доказательство.
\end{proof}

\begin{imp}\label{deh} Справедливо неравенство $|\De\sm H|\ge2n-2$.
\end{imp}

\begin{proof} Имеем $x:=\ep_1-\ep_n\in\De\cap C$, а~также $\bgm{\{x_1\sco x_n\}}=3$. Осталось воспользоваться предложением~\ref{est}.
\end{proof}

\begin{prop}\label{la2} Для всякого вектора $x\in(P\cap C)\sm\{0\}$ выполнено неравенство $\bgm{(Wx)\sm H}\ge2$, которое может обращаться в~равенство
лишь в~следующих случаях\:
\begin{nums}{-1}
\item\label{fst} $x_2\seq x_n$\~
\item\label{lst} $x_1\seq x_{n-1}$\~
\item\label{tt} $n=4$, $x_1=x_2$, $x_3=x_4$.
\end{nums}
\end{prop}

\begin{proof} Поскольку тавтологическое представление $W\cln\ha{\De}$ неприводимо, справедливы соотношения
\begin{align}
\sums{y\in Wx}y=0;\label{sum0}\\
\ha{Wx}=\ha{\De};\label{gen}\\
\caps{w\in W}(wH)=0.\label{inth}
\end{align}
Согласно~\eqref{sum0}, если $\bgm{(Wx)\sm H}\le1$, то $Wx\subs H$, $\ha{Wx}\subs H$, что противоречит~\eqref{gen}.

Тем самым доказано, что $\bgm{(Wx)\sm H}\ge2$.

Допустим, что $\bgm{(Wx)\sm H}=2$.

В~силу~\eqref{inth}, найдутся элементы $w_1\sco w_r\in W$, для которых $\capl{i=1}{r}(w_iH)=0$. Имеем $0\notin Wx$, $Wx=\cupl{i=1}{r}\br{(Wx)\sm(w_iH)}$,
$|Wx|\le\suml{i=1}{r}\bgm{(Wx)\sm(w_iH)}=\suml{i=1}{r}\bgm{(Wx)\sm H}=2(n-1)$. Осталось применить утверждение~\ref{cmb2}.
\end{proof}

\begin{prop}\label{elst} Пусть $x\in(P\cap C)\sm\{0\}$\т некоторый вектор, а~$\La'$\т орбита $Wx\subs P$. Тогда $\bgm{\Inn{\La}{}'\sm H}\ge2$, причём
точное равенство $\bgm{\Inn{\La}{}'\sm H}=2$ возможно только в~следующих случаях\:
\begin{nums}{-1}
\item $x_1-x_2=1$, $x_2\seq x_n$\~
\item $x_1\seq x_{n-1}$, $x_{n-1}-x_n=1$\~
\item $n=4$, $x_1=x_2$, $x_2-x_3=1$, $x_3=x_4$.
\end{nums}
\end{prop}

\begin{proof} Из предложения~\ref{la2} и~соотношения $\Inn{\La}{}'\sups\La'$ вытекает неравенство $\bgm{\Inn{\La}{}'\sm H}\ge2$, которое может обращаться
в~равенство только в~случаях \ref{fst}---\ref{tt}, перечисленных в~формулировке предложения~\ref{la2}, при отсутствии орбит
в~($W$\д инвариантном) подмножестве $\Inn{\La}{}'\bbl\br{\La'\sqcup\{0\}}\subs P$, т.\,е. при $\Inn{\La}{}'\sm\{0\}=\La'$.

Допустим, что $\bgm{\Inn{\La}{}'\sm H}=2$. Тогда, во-первых, справедливо равенство $\Inn{\La}{}'\sm\{0\}=\La'$, а~во-вторых, выполнено одно из условий
\ref{fst}---\ref{tt} формулировки предложения~\ref{la2}, откуда $x\notin\De$. Далее, для всякого $\al\in\De$, такого что $\ha{x|\al}>0$, имеем
$x-\al\in\Inn{\La}{}'\sm\{0\}=\La'$, что влечёт равенство $\ha{x|\al}=1$. В~частности, $\ba{x|(\ep_1-\ep_n)}=1$, $x_1-x_n=1$.

Теперь предложение полностью доказано.
\end{proof}

\begin{prop}\label{esn} Пусть $x\in P\cap C$\т некоторый вектор, а~$\La'$\т орбита $Wx\subs P$. Если $x\notin\De$ и~$\bgm{\{x_1\sco x_n\}}\ge3$, то
имеет место неравенство $\bgm{\Inn{\La}{}'\sm H}\ge2n$, которое может обращаться в~равенство лишь в~следующих случаях\:
\begin{nums}{-1}
\item $x_1=x_2$, $x_3\seq x_{n-1}$, $x_2-x_3=x_{n-1}-x_n=1$\~
\item $x_2\seq x_{n-2}$, $x_{n-1}=x_n$, $x_1-x_2=x_{n-2}-x_{n-1}=1$.
\end{nums}
\end{prop}

\begin{proof} Поскольку $\bgm{\{x_1\sco x_n\}}\ge3$, имеем $x_1-x_n>1$. Это позволяет выбрать среди всех пар $(i_1,i_2)\in\{1\sco n\}^2$,
удовлетворяющих неравенству $x_{i_1}-x_{i_2}>1$, пару $(i_1,i_2)$ с~минимальным возможным значением $i_2-i_1$.

Очевидно, что $i_2>i_1$. Кроме того, если $i=i_1+1\sco i_2-1$\т произвольное число, то $x_{i_1}-x_i\le1$ и~$x_i-x_{i_2}\le1$, в~то время как
$(x_{i_1}-x_i)+(x_i-x_{i_2})=x_{i_1}-x_{i_2}>1$, откуда $x_{i_1}-x_i=x_i-x_{i_2}=1$. Таким образом,
\eqn{\label{inc}
\begin{array}{c}
x_{i_1}-1\ge x_{i_2}+1;\\
(i_2-i_1>1)\quad\Ra\quad(x_{i_1}-1=x_{i_1+1}\seq x_{i_2-1}=x_{i_2}+1).
\end{array}}

Положим $\al:=\ep_{i_1}-\ep_{i_2}\in\De$. Из соотношений $\ha{x|\al}=x_{i_1}-x_{i_2}>1$ и~$x\notin\De$ следует, что
$y:=x-\al\in\Inn{\La}{}'\bbl\br{\La'\sqcup\{0\}}$. Обозначим через~$\La\"$ орбиту $Wy\subs P$.

Имеем $x\in C$, $x_1\sge x_n$, и, согласно~\eqref{inc},
\begin{gather}
x_1\sge x_{i_1-1}>x_{i_1}-1\ge x_{i_1+1}\sge x_{i_2-1}\ge x_{i_2}+1>x_{i_2+1}\sge x_n,\notag\\
(i_2-i_1>1)\quad\Ra\quad y_{i_1}\seq y_{i_2},\label{eqy}
\end{gather}
$y_1\sge y_{i_1-1}>y_{i_1}\sge y_{i_2}>y_{i_2+1}\sge y_n$. Следовательно, $y\in C$. Кроме того, выполнено включение
$\Inn{\La}{}'\sups\La'\sqcup\Inn{\La}{}\"$, а~значит, и~неравенство $\bgm{\Inn{\La}{}'\sm H}\ge|\La'\sm H|+\bgm{\Inn{\La}{}\"\sm H}$.

Допустим, что $\bgm{\Inn{\La}{}'\sm H}\le2n$.

Из предложений \ref{est} и~\ref{elst} вытекает, что $\bgm{\Inn{\La}{}'\sm H}=2n$, $y_1\seq y_j$, $y_{j+1}\seq y_n$, $y_j-y_{j+1}=1$,
$j\in\{1\sco n-1\}$, причём $j\in\{1,n-1\}$ либо $n=4$ и~$j=2$.

Заметим, что
\begin{nums}{-1}
\item как уже отмечалось, $y\ne0$, и, ввиду~\eqref{eqy}, $(i_1,i_2)\ne(1,n)$\~
\item если $i_1>1$, то $y_{i_1-1}>y_{i_1}$, $j=i_1-1<i_2-1\le n-1$, $(j=1)\lor\br{(n=4)\nd(j=2)}$\~
\item если $i_2<n$, то $y_{i_2}>y_{i_2+1}$, $j=i_2>i_1\ge1$, $(j=n-1)\lor\br{(n=4)\nd(j=2)}$.
\end{nums}
Поскольку $i_1-1<i_1<i_2$, выполняется ровно одно из равенств $i_1=1$ и~$i_2=n$.

Возможны следующие случаи.

\begin{cass}{ri} $i_1>1$, $i_2=n$, $j=i_1-1$ и~$j=1$.
\end{cass}

Имеем $i_1=2$, $y_1-1=y_2\seq y_n$, $x_1-1=x_2-1=x_3\seq x_{n-1}=x_n+1$.

\begin{cass}{ri2} $i_1>1$, $i_2=n$, $j=i_1-1$, $n=4$ и~$j=2$.
\end{cass}

Имеем $i_1=3$, $y_1=y_2=y_3+1=y_4+1$, $x_1=x_2=x_3=x_4+2$.

\begin{cass}{le} $i_1=1$, $i_2<n$, $j=i_2$ и~$j=n-1$.
\end{cass}

Имеем $i_2=n-1$, $y_1\seq y_{n-1}=y_n+1$, $x_1-1=x_2\seq x_{n-2}=x_{n-1}+1=x_n+1$.

\begin{cass}{le2} $i_1=1$, $i_2<n$, $j=i_2$, $n=4$ и~$j=2$.
\end{cass}

Имеем $i_2=2$, $y_1-1=y_2-1=y_3=y_4$, $x_1-2=x_2=x_3=x_4$.

Осталось воспользоваться тем, что $\bgm{\{x_1\sco x_n\}}\ge3$.
\end{proof}

\begin{prop}\label{es2n} Пусть $x\in P\cap C$\т некоторый вектор, а~$\La'$\т орбита $Wx\subs P$. Если
\eqn{\label{el2}
\bgm{\{x_1\sco x_n\}}=2,}
$x_1-x_n>1$ и~$\bgm{\Inn{\La}{}'\sm H}\le2n$, то выполняется одно из следующих условий\:
\begin{nums}{-1}
\item $x_1-x_2=2$, $x_2\seq x_n$\~
\item $x_1\seq x_{n-1}$, $x_{n-1}-x_n=2$\~
\item $n=4$, $x_1=x_2=1$, $x_3=x_4=-1$.
\end{nums}
\end{prop}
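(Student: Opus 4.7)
The plan is to follow the strategy used in the proof of Proposition~\ref{esn}. Write $x=(\underbrace{a,\ldots,a}_{k},\underbrace{b,\ldots,b}_{n-k})$ with $a>b$ and $d:=a-b\ge2$ (an integer, since $x\in P$). The involution $x\mapsto-w_0x$ (where $w_0$ is the longest element of $W$) interchanges $(a,b,k)$ with $(-b,-a,n-k)$ and so swaps conclusions~1) and~2); thus we may assume $k\le n-k$. Choose $\al:=\ep_{i_1}-\ep_{i_2}$ with $i_1\le k<i_2$, so that $x_{i_1}=a$ and $x_{i_2}=b$, and set $y:=x-\al$. Letting $x'$ be the permutation of $x$ obtained by swapping the values at positions $i_1,i_2$, the identity $y=\tfrac{d-1}{d}\,x+\tfrac{1}{d}\,x'$ shows $y\in\conv(\La')$, and since $y\in\La'+Q$ we have $y\in\Inn{\La'}$. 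The multiset of $y$ (consisting of $k-1$ copies of $a$, single copies of $a-1$ and $b+1$, and $n-k-1$ copies of $b$) differs from that of $x$, so the orbit $\La'':=Wy$ satisfies $\La''\ne\La'$, and property~(vi) of $\Inn{\cdot}$ yields $\La'\sqcup\Inn{\La''}\subs\Inn{\La'}$. Combining the hypothesis $|\Inn{\La'}\sm H|\le2n$ with Proposition~\ref{la2} (which gives $|\La'\sm H|\ge2$) now yields $|\Inn{\La''}\sm H|\le2n-2$.

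I would then split into three cases by the structure of $y$. If $|\{y_1,\ldots,y_n\}|=2$, then $d=2$ and $k\in\{1,n-1\}$, corresponding to conclusions~1) and~2) of the proposition. If $|\{y_i\}|\ge3$ and $y\notin W\De$, then Proposition~\ref{esn} applied to the dominant representative of $y$ gives $|\Inn{\La''}\sm H|\ge2n$, contradicting the bound above. Finally, if $y\in W\De$, then $\La''=\De$ and $\Inn{\La''}=\De\cup\{0\}$; since $0\in H$, Corollary~\ref{deh} combined with the preceding bound forces $|\De\sm H|=2n-2$, and a direct check of when the multiset of $y$ realises the $\De$-pattern $(1,n-2,1)$ shows this happens only for $(n,k,d,a,b)=(4,2,2,1,-1)$ (yielding conclusion~3)) or, exceptionally, for $n=3$ and $x=(2,-1,-1)$.

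The principal obstacle is eliminating the remaining $n=3$ configuration, which is not handled by Proposition~\ref{esn}. One excludes it by a compatibility argument: the equality $|\La'\sm H|=2$ needed for tightness forces, via case~1) of Proposition~\ref{la2}, that $c=b/a$ for $(p,q)\in\{(1,2),(1,n)\}$ or $c=1$ for $(p,q)=(n,n-1)$, whereas $|\De\sm H|=2n-2$ requires $c\in\{-1,0\}$ by direct enumeration of roots of $A_2$ lying in $H$. Combined with the constraint $a+2b=0$ imposed by $x\in P$, which fixes $(a,b)=(2,-1)$, these two conditions on $c$ have no common solution for any admissible $(p,q)$. Hence $|\Inn{\La'}\sm H|>2n$ in this case, contradicting the hypothesis and completing the proof.
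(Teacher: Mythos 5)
Your argument is correct and is essentially the paper's own proof: subtract a root joining an $a$-position to a $b$-position, use $\La'\sqcup\Inn{\La}{}''\subs\Inn{\La}{}'$ together with Propositions~\ref{la2}, \ref{esn} and Corollary~\ref{deh} to force either $|\{y_1\sco y_n\}|\le2$ (giving conclusions 1--2) or $y\in\De$ (giving conclusion 3 or the residual $n=3$ configuration), and kill the latter by observing that the line $H$ would then have to contain both a root and an element of $Wx$, which are never proportional. One cosmetic slip in that last step: the element of the three-point orbit $Wx$ lying on $H$ need not be the dominant one, so the admissible values of $c$ are $b/a$, $a/b$ and $1$ rather than only the ones you list --- but all three avoid $\{-1,0\}$, so the contradiction survives.
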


\begin{proof} По условию $x_1\seq x_j$, $x_{j+1}\seq x_n$ и~$x_j-x_{j+1}\ge2$, где $j\in\{1\sco n-1\}$.

Положим $\al:=\al_j\in\De$. Имеем $\ha{x|\al}=x_j-x_{j+1}>1$, и~поэтому $y:=x-\al\in\Inn{\La}{}'\sm\La'$. Кроме того, $x\in C$, $x_1\sge x_n$,
$x_1\sge x_{j-1}>x_j-1\ge x_{j+1}+1>x_{j+2}\sge x_n$, $y_1\sge y_{j-1}>y_j\ge y_{j+1}>y_{j+2}\sge y_n$, $y\in C$.

Обозначим через~$\La\"$ орбиту $Wy\subs P$. Легко видеть, что $\Inn{\La}{}'\sups\La'\sqcup\Inn{\La}{}\"$. Следовательно,
$2n\ge\bgm{\Inn{\La}{}'\sm H}\ge|\La'\sm H|+\bgm{\Inn{\La}{}\"\sm H}$. Согласно предложению~\ref{la2}, $|\La'\sm H|\ge2$, что влечёт
неравенство $\bgm{\Inn{\La}{}\"\sm H}\le2n-2<2n$. Пользуясь предложением~\ref{esn}, получаем, что либо $\bgm{\{y_1\sco y_n\}}\le2$, либо $y\in\De$.

Допустим, что $\bgm{\{y_1\sco y_n\}}\le2$.

Для целых чисел $k_1:=\bgm{\{y_1\sco y_{j-1}\}}$, $k_2:=\bgm{\{y_j,y_{j+1}\}}$, $k_3:=\bgm{\{y_{j+2}\sco y_n\}}$ имеем
$k_1,k_3\ge0$, $k_2\ge1$, $(k_1=0)\Ra(j=1)$ и~$(k_3=0)\Ra(j=n-1)$. Далее, $n-1>1$, откуда
\begin{gather*}
k_1+k_3\ge1;\\
(k_1+k_3=1)\quad\Ra\quad\br{j\in\{1,n-1\}}.
\end{gather*}
При этом $(k_1+k_3)+k_2=\bgm{\{y_1\sco y_n\}}\le2$ и~$k_2\ge1$. Значит, $k_1+k_3=1$, $j\in\{1,n-1\}$, а~также $k_2=1$, $y_j=y_{j+1}$, $x_j-x_{j+1}=2$.

Теперь предположим, что $y\in\De$.

В~силу~\eqref{el2}, $0\notin\{x_1\sco x_n\}$. Из этого, а~также из соотношений $x=y+\al\in\De+\De$ и~$x\in C$ вытекает, что
\begin{gather*}
n\le4;\\
(n=4)\quad\Ra\quad(x_1=x_2=1,\ x_3=x_4=-1).
\end{gather*}
Допустим, что $n=3$. Тогда $r=2$,
\eqn{\label{line}
\dim H=r-1=1.}
Далее, $\La\"=\De$, $\Inn{\La}{}'\sups\La'\sqcup\De$,
$|\La'|+|\De|\le|\La'\cap H|+|\De\cap H|+\bgm{\Inn{\La}{}'\sm H}\le|\La'\cap H|+|\De\cap H|+2n$, $3+n(n-1)\le|\La'\cap H|+|\De\cap H|+2n$,
$|\La'\cap H|+|\De\cap H|\ge n(n-3)+3=3$, и, согласно~\eqref{line}, $|\La'\cap H|,|\De\cap H|\le2<3\le|\La'\cap H|+|\De\cap H|$,
$|\La'\cap H|,|\De\cap H|>0$, $\La'\cap H,\De\cap H\ne\es$. В~то же время, ввиду~\eqref{el2}, ни один корень системы корней $\De\subs\R^n$
не пропорционален ни одному из векторов орбиты $\La'=Wx\subs P$, что противоречит~\eqref{line}.

Таким образом, предложение доказано.
\end{proof}

\begin{imp}\label{ampl} Пусть $x\in P\cap C$\т некоторый вектор. Обозначим через~$\La'$ орбиту $Wx\subs P$. Если $x_1-x_n>2$, то
$\bgm{\Inn{\La}{}'\sm H}>2n$ и~$|\La'\sm H|\ge2$.
\end{imp}

\begin{proof} Вытекает из предложений \ref{la2}, \ref{esn} и~\ref{es2n}.
\end{proof}

\begin{prop}\label{eson} Пусть $x\in P\cap C$\т некоторый вектор, а~$\La'$\т орбита $Wx\subs P$. Если $x\notin\De$, $x_1-x_n>1$, $-\La'=\La'$
и~$\bgm{\Inn{\La}{}'\sm H}\le4n$, то $n=4$, $x_1=x_2=1$ и~$x_3=x_4=-1$.
\end{prop}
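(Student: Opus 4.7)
The plan is as follows. First, the hypothesis $-\La'=\La'$ gives that $-x\in Wx$, and since $C$ meets every $W$-orbit in at most one point, the decreasing reordering $(-x_n,\ldots,-x_1)$ of $-x$ must coincide with $x$; hence $x_i+x_{n+1-i}=0$ for every $i$. In particular $x_1-x_n=2x_1$ is an integer exceeding $1$, whence $x_1\ge 1$ and $x_1-x_n\ge 2$.

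The base case $x_1-x_n=2$, i.e.\ $x_1=1$ and $x_n=-1$, is handled directly: the palindromic shape forces $x=(1^k,0^{n-2k},(-1)^k)$ for some $k\in\{1,\ldots,\lfloor n/2\rfloor\}$. The value $k=1$ is excluded by $x\notin\De$, and $n=4$, $k=2$ yields the claimed vector $(1,1,-1,-1)$. Every remaining configuration is refuted by exhibiting three disjoint sub-orbits of $\Inn{\La}{}'$: the orbit $\La'$ itself, the root system $\De=W(\ep_1-\ep_n)$ (since $\ep_1-\ep_n\preceq x$ in dominance), and, in the rectangle case $k=n/2$ with $n\ge 6$, the orbit of the dominated weight $(1^{n/2-1},0,0,(-1)^{n/2-1})$. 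Proposition~\ref{est} (applicable because the relevant maximal multiplicity differs from $n-2$), Corollary~\ref{deh}, and Proposition~\ref{la2} contribute respectively $\ge 5(n-2)$, $\ge 2n-2$, and $\ge 2$ to $|\Inn{\La}{}'\sm H|$, totalling $\ge 7n-12>4n$ for $n\ge 5$, a contradiction.

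For the inductive step $x_1-x_n>2$ I iterate the root-subtraction device from the proof of Proposition~\ref{esn}: choose $(i_1,i_2)$ with $x_{i_1}-x_{i_2}>1$ and $i_2-i_1$ minimal, set $y:=x-\ep_{i_1}+\ep_{i_2}$ and $\La'':=Wy$, so that $\Inn{\La}{}'\sups\La'\sqcup\Inn{\La}{}''$. The palindrome condition supplies a dual pair $(n+1-i_2,n+1-i_1)$ of the same minimal gap with companion $y':=x-\ep_{n+1-i_2}+\ep_{n+1-i_1}$, and one easily checks $y'=-\pi(y)$ for the order-reversing permutation $\pi\in W$; hence $Wy'=-\La''$. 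Two scenarios arise. If $-\La''\ne\La''$, then $\pm y+Q$ are distinct cosets in $P/Q$, yielding the disjoint inclusion $\Inn{\La}{}'\sups\La'\sqcup\Inn{\La}{}''\sqcup(-\Inn{\La}{}'')$; combining $|\La'\sm H|\ge 2$ from~\ref{la2} with $|\Inn{\La}{}''\sm H|\ge 2n$ from~\ref{esn} or~\ref{es2n} applied to $y$ gives $|\Inn{\La}{}'\sm H|\ge 4n+2>4n$, a contradiction. If $-\La''=\La''$, the orbit $\La''$ is palindromic and I apply the present proposition inductively to $y$, using the well-founded quantity $\sum_i|x_i|$, which strictly decreases whenever $(i_1,i_2)$ is chosen so that $x_{i_1}>0>x_{i_2}$; the palindromic structure together with $x_1-x_n>2$ always permits such a minimal-gap choice.

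The principal obstacle I anticipate is the bookkeeping in the second scenario: one must verify that the inductive choice of $y$ preserves all the hypotheses of the proposition ($y\notin\De$, $y_1-y_n>1$, and the inherited bound $|\Inn{\La}{}''\sm H|\le 4n$) while strictly decreasing $\sum_i|x_i|$, so that the induction actually reaches the base case. A secondary technicality is the disjointness $\Inn{\La}{}''\cap(-\Inn{\La}{}'')=\es$ in the first scenario, which follows from the elementary observation that $\La''+Q$ and $-\La''+Q$ are cosets of $Q$ in $P$, hence either equal or disjoint.
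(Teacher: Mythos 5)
Your base case $x_1-x_n=2$ is sound and essentially reproduces the paper's third case (the paper uses the single auxiliary weight $(\ep_1+\ep_2)-(\ep_{n-1}+\ep_n)$ together with $\De$ for all $n>4$, but your variant with an extra orbit in the rectangle case counts out the same way). The inductive step for $x_1-x_n>2$, however, has a genuine gap, and it sits exactly where you flagged a \emph{secondary technicality}. The disjointness $\Inn{\La}{}''\cap(-\Inn{\La}{}'')=\es$ is false, and your coset justification points the wrong way: the hypothesis $-\La'=\La'$ forces $2x\in Q$, and $y=x-\al\in x+Q$, so $-y+Q=-x+Q=x+Q=y+Q$ --- the two cosets \emph{coincide}, and the dichotomy "equal or disjoint" yields equality. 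Concretely, $\Inn{\La}{}''$ and $-\Inn{\La}{}''$ share every symmetric orbit lying in $\conv(Wy)\cap\conv(-Wy)$ and in the common coset: already for $x=2(\ep_1-\ep_3)$ in $A_2$ one gets $y=(1,1,-2)$ and both saturations contain $\De\cup\{0\}$. Hence $|\Inn{\La}{}''\sm H|+\bgm{(-\Inn{\La}{}'')\sm H}$ double-counts and the bound $4n+2$ is not established. Two further holes: Propositions \ref{esn} and \ref{es2n} do not give an unconditional $|\Inn{\La}{}''\sm H|\ge2n$ --- each has exceptional configurations (and \ref{esn} also needs $y\notin\De$ and $\bgm{\{y_1\sco y_n\}}\ge3$) that you never exclude; and in your second scenario the induction does not close, because when the inductive call returns its exceptional conclusion ($n=4$, $\wt{y}=(1,1,-1,-1)$) you obtain no contradiction for~$x$.

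The paper avoids induction altogether. For $x_1-x_n>2$ it splits on $x_2>0$ versus $x_2=0$ (note $x_2\ge0$ by the palindromic symmetry). If $x_2>0$, it subtracts $\al=\ep_1-\ep_n$ and observes that $\La'$ and $Wy$ are disjoint suborbits each with fewer than $4n$ elements outside $H$, so by part 3) of Proposition \ref{est} both $x$ and $y$ have fewer than four distinct coordinates; but $x$ takes the values $\pm x_1,\pm x_2$ and $y$ the values $\pm(x_1-1),\pm x_2$ with $x_1,\ x_1-1,\ x_2>0$, forcing $x_2=x_1$ and $x_2=x_1-1$ simultaneously. If $x_2=0$, then $x=x_1(\ep_1-\ep_n)$ with $x_1\ge2$, and the paper takes $y=(x_1-1)\ep_1+\ep_2-x_1\ep_n$ and uses the decomposition $\La'\sqcup(-\La'')\sqcup\Inn{\La}{}''$ --- only the orbit $-\La''$, not its saturation, whose disjointness from $\Inn{\La}{}''$ is verified by a dominance computation --- together with Corollary \ref{ampl}, which bounds $|\La''\sm H|+|\Inn{\La}{}''\sm H|$ jointly. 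You would need something of this precision to repair your first scenario.
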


\begin{proof} Для всякого $i=1\sco n$ справедливо равенство $x_{n+1-i}=-x_i$. Далее, поскольку $n-1\ge2$, имеем $x_2\ge x_{n-1}=-x_2$, $x_2\ge0$.

Возможны следующие случаи.

\begin{cass}{big} $x_1-x_n>2$ и~$x_2>0$.
\end{cass}

\begin{cass}{mid} $x_1-x_n>2$ и~$x_2=0$.
\end{cass}

\begin{cass}{sma} $x_1-x_n=2$.
\end{cass}

Вначале разберём случай~\ref{big}.

Имеем $x_1=\frac{1}{2}(x_1-x_n)>1$.

Положим $\al:=\ep_1-\ep_n\in\De$. Пользуясь соотношениями $\ha{x|\al}=x_1-x_n>2$ и~$x\notin\De$, получаем, что
$y:=x-\al\in\Inn{\La}{}'\bbl\br{\La'\sqcup\{0\}}$. Обозначим через~$\La\"$ орбиту $Wy\subs P$. Имеем $\La\"\cap C=\bc{\wt{y}}$,
$\wt{y}\in(P\cap C)\sm\{0\}$. Кроме того, $\Inn{\La}{}'\sups\La'\sqcup\La\"$,
$4n\ge\bgm{\Inn{\La}{}'\sm H}\ge|\La'\sm H|+|\La\"\sm H|$, и, в~силу предложения~\ref{la2}, $|\La'\sm H|,|\La\"\sm H|<4n$.

Согласно предложению~\ref{est}, $\bgm{\{x_1\sco x_n\}},\Bm{\bc{\wt{y}_1\sco\wt{y}_n}}<4$, $\bgm{\{y_1\sco y_n\}}<4$. При этом
$\{x_1\sco x_n\}\sups\{x_1,x_2,x_{n-1},x_n\}=\{\pm x_1,\pm x_2\}$, $\{y_1\sco y_n\}=\{y_1,x_2\sco x_{n-1},y_n\}$, $y_1=x_1-1>0$, $y_n=x_n+1=-x_1+1=-y_1$,
$\{y_1\sco y_n\}\sups\{\pm y_1,\pm x_2\}$. Таким образом, $\bgm{\{\pm x_1,\pm x_2\}}<4$ и~$\bgm{\{\pm y_1,\pm x_2\}}<4$.
В~то же время $x_1,y_1,x_2>0$. Отсюда следует, что $x_2=x_1$ и~$x_2=y_1=x_1-1$.

Тем самым мы в~случае~\ref{big} пришли к~противоречию.

Теперь разберём случай~\ref{mid}.

Имеем $x_{n-1}=-x_2=0$, $x_2\seq x_{n-1}=0$, $x=x_1(\ep_1-\ep_n)$. Далее, $x_1=x_1-x_2\in\Z$, а~также $x_1=\frac{1}{2}(x_1-x_n)>1$, откуда $x_1\ge2$.
Заметим, что $x_1-1\ge1>0>-x_1$, и, как следствие, $y:=(x_1-1)\ep_1+\ep_2-x_1\ep_n\in\br{\Inn{\La}{}'\sm\La'}\cap C\subs P$.

Орбиты $\La\":=Wy\subs P$ и~$-\La\"\subs P$ различны, поскольку $y_1+y_n=(x_1-1)-x_1\ne0$. Отсюда
$\Inn{\La}{}'\sups\La'\sqcup(-\La\")\sqcup\Inn{\La}{}\"$, $4n\ge\bgm{\Inn{\La}{}'\sm H}\ge|\La'\sm H|+|\La\"\sm H|+\bgm{\Inn{\La}{}\"\sm H}$.
При этом
\begin{nums}{-1}
\item $\bgm{\{x_1\sco x_n\}}=\bgm{\{\pm x_1,0\}}=3$, и, согласно предложению~\ref{est}, $|\La'\sm H|\ge2n-2$\~
\item $y_1-y_n=(x_1-1)+x_1=2x_1-1\ge3$, и, в~силу следствия~\ref{ampl}, $|\La\"\sm H|+\bgm{\Inn{\La}{}\"\sm H}>2n+2$.
\end{nums}
Значит, $|\La'\sm H|+|\La\"\sm H|+\bgm{\Inn{\La}{}\"\sm H}>(2n-2)+(2n+2)=4n$. Получили противоречие.

Наконец, разберём случай~\ref{sma}.

Имеем $x_1=\frac{1}{2}(x_1-x_n)=1$, $x=(\ep_1\spl\ep_j)-(\ep_{n+1-j}\spl\ep_n)$, $j\in\N$, $j\le\frac{n}{2}$. По условию $x\notin\De$, откуда $j\ge2$,
$n\ge4$. Если $n=4$, то $j=2$, $x_1=x_2=1$, $x_3=x_4=-1$.

Допустим, что $n>4$. Положим $y:=(\ep_1+\ep_2)-(\ep_{n-1}+\ep_n)\in P\cap C$. Легко видеть, что $\Inn{\La}{}'\sups(Wy)\sqcup\De$. Значит,
$4n\ge\bgm{\Inn{\La}{}'\sm H}\ge\bgm{(Wy)\sm H}+|\De\sm H|$, и, согласно следствию~\ref{deh}, $\bgm{(Wy)\sm H}\le4n-|\De\sm H|\le4n-(2n-2)=2n+2$.
Далее, $n-4>0$, $\bgm{\{y_1\sco y_n\}}=3$, и, кроме того, $n-2>2$, $\max\{2,n-4\}<n-2$. Из этого, а~также из предложения~\ref{est} вытекает, что
$\bgm{(Wy)\sm H}\ge5(n-2)=(2n+2)+3(n-4)>2n+2$. Получили противоречие.

Теперь предложение полностью доказано.
\end{proof}

Применяя к~вектору $\la\in(P\cap C)\sm\{0\}$ и~его орбите $\La\subs P\sm\{0\}$ предложения \ref{esn}---\ref{eson}, получаем утверждение
леммы~\ref{lah}.

Докажем теперь лемму~\ref{lah1}.

Согласно условию, $j\in\{3\sco n-3\}$, $\la_1\seq\la_j$, $\la_{j+1}\seq\la_n$ и~$\la_j-\la_{j+1}=1$. В~частности, $\la_{n-1}=\la_n$, и, значит,
$H=\bc{x\in\ha{\De}\cln x_{n-1}=x_n}$, $|\La\sm H|=\rbmat{2\\1}\cdot\rbmat{n-2\\j-1}$. При этом $2\le j-1\le(n-2)-2$, откуда
$\rbmat{n-2\\j-1}\ge\rbmat{n-2\\2}=\frac{(n-2)(n-3)}{2}$. Следовательно, $|\La\sm H|\ge(n-2)(n-3)>n(n-5)=n(r-4)=4n+n(r-8)\ge4n$.

Тем самым лемма~\ref{lah1} доказана.

\subsubsection{Дополнительные утверждения}

В~этом пункте будут приведены (с~доказательствами) предложения \ref{alg}---\ref{ths}.

Для произвольных попарно различных чисел $i_1\sco i_k\in\{1\sco n\}$, где $k=1\sco n$, положим
$\la_{(i_1\sco i_k)}:=(\ep_{i_1}\spl\ep_{i_k})-\frac{k}{n}\cdot(\ep_1\spl\ep_n)\in P$. Далее, для произвольных попарно различных чисел
$i,i_1,i_2\in\{1\sco n\}$ положим
\begin{gather*}
\la_{(i_1,i_2)}^{(i)}:=\la_{(i_1)}+(\ep_{i_2}-\ep_i)=\la_{(i_2)}+(\ep_{i_1}-\ep_i)=\\=(\ep_{i_1}+\ep_{i_2}-\ep_i)-\frac{1}{n}\cdot(\ep_1\spl\ep_n)\in P.
\end{gather*}

\begin{prop}\label{alg} Если $r>2$ и~$\la=\ph_2+\ph_r$, то найдутся подмножество $\Om\subs\La$ и~гиперплоскость $H\subs\ha{\De}$, такие что $\ha{\Om}=H$,
$(\Om-\Om)\cap\De=\es$ и~$2\cdot\bgm{\Inn{\La}\sm H}>|\De\sm H|+6$.
\end{prop}

\begin{proof} Имеем $n=r+1\ge4$, $\la=\la_{(1,2)}^{(n)}$ и~$\Inn{\La}=\La\sqcup\{\la_{(1)}\sco\la_{(n)}\}$.

Положим $\Om:=\bc{\la_{(i-1,i)}^{(i+1)}\cln i=2\sco r}\subs\La$ и~$H:=\ha{\Om}\subs\ha{\De}$.

Докажем, что подмножество $\Om\subs\La$ и~подпространство $H\subs\ha{\De}$ искомые.

Произвольный вектор вида $(\ep_{j-1}+\ep_j-\ep_{j+1})-(\ep_{i-1}+\ep_i-\ep_{i+1})\in P$ ($2\le i<j\le r$) имеет $(i+1)$\д ю координату, равную~$2$, при
$j\le i+2$ и~шесть ненулевых координат при $j>i+2$. Значит, $(\Om-\Om)\cap\De=\es$.

Подпространство $H':=H^{\perp}\cap\ha{\De}\subs\R^n$ задаётся уравнениями $x_{i-1}+x_i=x_{i+1}$, где $i=2\sco r$, и~$x_1\spl x_n=0$. При этом
$\dim H\le|\Om|=r-1$, $\dim H'=r-\dim H\ge1$.

Пусть $x\in H'$\т произвольный ненулевой вектор.

Покажем, что $x_1,x_2,x_3,x_4\ne0$.

Имеем $x_{i-1}+x_i=x_{i+1}$ ($i=2\sco r$) и~$x_1\spl x_n=0$, откуда
\begin{gather}
\fa i=2\sco r\quad\quad\quad x_{i-1}x_i\le x_{i-1}x_i+x_i^2=x_ix_{i+1};\label{seg}\\
2x_3+\Br{\suml{i=4}{n}x_i}=(x_1+x_2+x_3)+\Br{\suml{i=4}{n}x_i}=\suml{i=1}{n}x_i=0;\notag\\
4x_3^2+4\cdot\Br{\suml{i=4}{n}x_3x_i}+\Br{\suml{i_1,i_2=4}{n}x_{i_1}x_{i_2}}=0.\label{sqs}
\end{gather}

Предположим, что найдётся пара $(i_1,i_2)\in\{3\sco n\}^2$, удовлетворяющая неравенствам $i_1\le i_2$ и~$x_{i_1}x_{i_2}<0$. Среди всех указанных пар
выберем пару $(i_1,i_2)$ с~минимально возможным значением $i_1+i_2$. Имеем $i_1<i_2$. Далее, если $i_2\ge5$, то $3\le i_2-2<i_2-1<n$
и~$i_1+(i_2-2)<i_1+(i_2-1)<i_1+i_2$, откуда $x_{i_1}x_{i_2-2},x_{i_1}x_{i_2-1}\ge0$, $x_{i_1}(x_{i_2-2}+x_{i_2-1})\ge0$, $x_{i_1}x_{i_2}\ge0$, что
противоречит предположению. Значит, $i_2<5$, $3\le i_1<i_2<5$, и~поэтому $(i_1,i_2)=(3,4)$, $x_3x_4<0$. В~силу~\eqref{seg},
$x_1x_2\le x_2x_3\le x_3x_4<0$, $x_1,x_2,x_3,x_4\ne0$.

Теперь допустим, что для любых $i_1,i_2\in\{3\sco n\}$ ($i_1\le i_2$) выполнено неравенство $x_{i_1}x_{i_2}\ge0$. Тогда, согласно~\eqref{sqs},
$x_3^2\seq x_n^2=0$, $x_3\seq x_n=0$, $x_2=x_4-x_3=0$, $x_1=x_3-x_2=0$, $x_1\seq x_n=0$, в~то время как $x\ne0$. Получили противоречие.

Тем самым нами установлено, что $\dim H'\ge1$, причём для всякого ненулевого вектора $x\in H'$ справедливо соотношение $x_1,x_2,x_3,x_4\ne0$. Как
следствие, $H'=\R x\subs\ha{\De}$ ($x\in\ha{\De}$, $x_1,x_2,x_3,x_4\ne0$), $\dim H'=1$,
$H=(H')^{\perp}\cap\ha{\De}=\bc{y\in\ha{\De}\cln(y,x)=0}\subs\R^n$, $\dim H=r-1$.

Осталось доказать, что $2\cdot\bgm{\Inn{\La}\sm H}>|\De\sm H|+6$.

Для любого $i=1,2,3,4$ имеем $(\la_{(i)},x)=(\ep_i,x)=x_i\ne0$, откуда $\la_{(i)}\in\Inn{\La}\sm H$. Значит,
$\bgm{\Inn{\La}\sm H}\ge|\La\sm H|+4>|\La\sm H|+3$.

Пусть $I=\{i_1,i_2\}\subs\{1\sco n\}$\т произвольное двухэлементное подмножество.

Докажем, что найдётся число $i\in\{1\sco n\}\sm I$, для которого $x_i\ne x_{i_1}+x_{i_2}$.

Предположим, что для любого $i\in\{1\sco n\}\sm I$ имеет место равенство $x_i=x_{i_1}+x_{i_2}$. Тогда
$0=\suml{i=1}{n}x_i=(x_{i_1}+x_{i_2})+(n-2)(x_{i_1}+x_{i_2})=(n-1)(x_{i_1}+x_{i_2})$, $x_{i_1}+x_{i_2}=0$. Таким образом, все числа $x_i\in\R$, где
$i\in\{1\sco n\}$ и~$i\ne i_1,i_2$, нулевые, что невозможно ввиду соотношения $x_1,x_2,x_3,x_4\ne0$.

Согласно вышесказанному, для любых различных чисел $i_1,i_2\in\{1\sco n\}$ найдётся число $i\in\{1\sco n\}\bbl\{i_1,i_2\}$, такое что
$x_i\ne x_{i_1}+x_{i_2}$ (и~поэтому $(\ep_{i_1}+\ep_{i_2}-\ep_i,x)\ne0$, $(\la_{(i_1,i_2)}^{(i)},x)\ne0$, $\la_{(i_1,i_2)}^{(i)}\in\La\sm H$). Значит,
$|\La\sm H|\ge C_n^2$, $\bgm{\Inn{\La}\sm H}>|\La\sm H|+3\ge C_n^2+3$ и, следовательно,
$2\cdot\bgm{\Inn{\La}\sm H}>2\cdot C_n^2+6=|\De|+6\ge|\De\sm H|+6$.
\end{proof}

\begin{prop}\label{all} Если $r=3$ и~$\la=\ph_1+\ph_2+\ph_3$, то
\begin{nums}{-1}
\item $2\la\notin\De\cup(\De+\De)$\~
\item найдутся подмножество $\Om\subs\La$ и~гиперплоскость $H\subs\ha{\De}$, такие что $\ha{\Om}=H$, $(\Om+\Om)\cap\De=(\Om-\Om)\cap\De=\es$
и~$\bgm{\Inn{\La}\sm H}>|\De\sm H|+6$.
\end{nums}
\end{prop}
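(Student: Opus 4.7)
For part (i), I will exploit that $\la=\ph_1+\ph_2+\ph_3=\rho$ is the half-sum of positive roots, so in the standard coordinates $\la=(\tfrac{3}{2},\tfrac{1}{2},-\tfrac{1}{2},-\tfrac{3}{2})$ and $2\la=(3,1,-1,-3)$. Every element of $\De$ has coordinates in $\{-1,0,1\}$ and every element of $\De+\De$ has coordinates in $\{-2,-1,0,1,2\}$, so the entry~$3$ of $2\la$ immediately rules out membership in $\De\cup(\De+\De)$.

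For part (ii), I will take $\Om:=\{\la,w\la\}\subs\La$ with $w\in W$ the product of the transpositions $(1,2)$ and $(3,4)$, so that $w\la=(\tfrac{1}{2},\tfrac{3}{2},-\tfrac{3}{2},-\tfrac{1}{2})$, and set $H:=\ha{\Om}$. Since $\la$ and $w\la$ are clearly linearly independent, $\dim H=2=r-1$, which matches the codimension-one hypothesis of Corollary~\ref{nosm}. A direct calculation identifies $H$ with $\{x\in\ha{\De}\cln x_1+x_4=x_2+x_3=0\}$. The conditions $(\Om\pm\Om)\cap\De=\es$ are then verified by inspecting the six candidate vectors $\pm(\la-w\la)=\pm(1,-1,1,-1)$, $\la+w\la=(2,2,-2,-2)$, $2\la=(3,1,-1,-3)$, $2w\la=(1,3,-3,-1)$; none has the form $\pm(\ep_i-\ep_j)$. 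The choice of~$w$ is deliberate: since $w$ acts on disjoint coordinate pairs, $\la\pm w\la$ necessarily has four nonzero entries and cannot be a root.

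The remaining inequality $\bgm{\Inn{\La}\sm H}>|\De\sm H|+6$ will be proved by explicit counting. First I will enumerate $\Inn{\La}$: the dominant weights in $(\la+Q)\cap\conv(W\la)$ with half-integer coordinates satisfying the partial-sum bounds against~$\la$ turn out to be exactly $\la$, $2\ph_1$, $2\ph_3$, and $\ph_2$, so $\Inn{\La}=W\la\sqcup W(2\ph_1)\sqcup W(2\ph_3)\sqcup W\ph_2$, with $24+4+4+6=38$ elements. For each orbit I will count its intersection with~$H$ by classifying which permutations split the value multiset into two zero-sum pairs at positions $\{1,4\}$ and $\{2,3\}$: this yields $|\La\cap H|=8$, $|W(2\ph_1)\cap H|=|W(2\ph_3)\cap H|=0$, and $|W\ph_2\cap H|=4$, so $\bgm{\Inn{\La}\sm H}=16+4+4+2=26$. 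Only $\pm(\ep_1-\ep_4)$ and $\pm(\ep_2-\ep_3)$ among the twelve roots lie in~$H$, so $|\De\sm H|=8$ and the required strict inequality $26>8+6=14$ follows. The only step that needs nontrivial (though routine) care is the enumeration of $\Inn{\La}$; everything else reduces to elementary permutation arithmetic.
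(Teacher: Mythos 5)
Your proof is correct, and part (i) coincides with the paper's argument. For part (ii) you follow the same overall strategy as the paper --- take a two-element subset $\Om\subs\La$, set $H:=\ha{\Om}$ (a $2$-dimensional, i.e.\ codimension-one, subspace of $\ha{\De}$), and verify the conditions by direct coordinate computation --- but with a different choice of $\Om$. The paper takes $\Om=\BC{\frac{1}{2}\br{3(\ep_1-\ep_4)+(\ep_2-\ep_3)},\frac{1}{2}\br{3(\ep_1-\ep_2)+(\ep_4-\ep_3)}}$, for which $H=\bc{x\in\ha{\De}\cln x_1+3x_3=0}$ meets $\La$ in only $4$ points; this gives $\bgm{\Inn{\La}\sm H}\ge|\La\sm H|=20>18=|\De|+6\ge|\De\sm H|+6$ at once, with no need to enumerate $\Inn{\La}$ or to compute $|\De\cap H|$. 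Your $H=\bc{x\in\ha{\De}\cln x_1+x_4=x_2+x_3=0}$ contains more of $\La$ (eight points), so $|\La\sm H|=16$ is not by itself enough, and you must carry out the full enumeration $\Inn{\La}=W\la\sqcup W(2\ph_1)\sqcup W(2\ph_3)\sqcup W\ph_2$ ($38$ elements) and the count $|\De\sm H|=8$ to reach $26>14$. All of these counts check out (the orbit decomposition of $\Inn{\La}$ is the correct weight set of the irreducible module with highest weight $\rho$ for $A_3$), so your argument is a valid, if more laborious, proof; the paper's choice of $\Om$ is simply the more economical one.
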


\begin{proof} Имеем $\la=\frac{1}{2}\cdot\br{3(\ep_1-\ep_4)+(\ep_2-\ep_3)}$, $2\la=3(\ep_1-\ep_4)+(\ep_2-\ep_3)\notin\De\cup(\De+\De)$,
$|\La|=24$ и~$|\De|=12$.

Положим $\Om:=\BC{\frac{1}{2}\cdot\br{3(\ep_1-\ep_4)+(\ep_2-\ep_3)},\frac{1}{2}\cdot\br{3(\ep_1-\ep_2)+(\ep_4-\ep_3)}}\subs\La$. Как легко заметить,
$(\Om+\Om)\cap\De=(\Om-\Om)\cap\De=\es$ и~$H:=\ha{\Om}=\bc{x\in\ha{\De}\cln x_1+3x_3=0}\subs\ha{\De}$. Далее, $\La\cap H=\Om\sqcup(-\Om)\subs\La$,
$|\La\cap H|=4$, $|\La\sm H|=20>|\De|+6\ge|\De\sm H|+6$.
\end{proof}

\begin{prop}\label{thi} Если $r=6$ и~$\la=\ph_3$, то существуют подмножество $\Om\subs\La$ и~гиперплоскость $H\subs\ha{\De}$, такие что $\ha{\Om}=H$,
$(\Om-\Om)\cap\De=\es$ и~$2\cdot\bgm{\Inn{\La}\sm H}>|\De\sm H|+6$.
\end{prop}

\begin{proof} Согласно условию, $\la=\la_{(1,2,3)}$ и~$|\La|=35$.

Положим $\Om:=\bc{\la_{(1,2,5)},\la_{(3,4,5)},\la_{(1,4,6)},\la_{(2,3,6)},\la_{(5,6,7)}}\subs\La$. Имеем $(\Om-\Om)\cap\De=\es$
и~$H:=\ha{\Om}=\bc{x\in\ha{\De}\cln x_1+x_3=x_2+x_4}\subs\ha{\De}$. Значит, $|\De\sm H|=n^2-(3^2+2^2+2^2)=32$. Кроме того,
$\La\cap H=\bc{\la_{(i_1,i_2,i_3)}\in P\cln i_1\in\{1,3\},i_2\in\{2,4\},i_3\in\{5,6,7\}}\sqcup\{\la_{(5,6,7)}\}\subs\La$, $|\La\cap H|=13$,
$|\La\sm H|=22$, $2\cdot|\La\sm H|=44>|\De\sm H|+6$.
\end{proof}

\begin{prop}\label{ths} Если $r=7$ и~$\la=\ph_3$, то существуют подмножество $\Om\subs\La$ и~гиперплоскость $H\subs\ha{\De}$, такие что $\ha{\Om}=H$,
$(\Om-\Om)\cap\De=\es$ и~$2\cdot\bgm{\Inn{\La}\sm H}>|\De\sm H|+6$.
\end{prop}

\begin{proof} Имеем $\la=\la_{(1,2,3)}$ и~$|\La|=|\De|=56$.

Положим $\Om:=\bc{\la_{(1,2,7)},\la_{(3,4,7)},\la_{(5,6,7)},\la_{(4,5,8)},\la_{(1,6,8)},\la_{(2,3,8)}}\subs\La$. Как легко заметить,
$(\Om-\Om)\cap\De=\es$ и~$H:=\ha{\Om}=\bc{x\in\ha{\De}\cln x_1+x_3+x_5=x_2+x_4+x_6}\subs\ha{\De}$. Далее,
$\La\cap H=\bc{\la_{(i_1,i_2,i_3)}\in P\cln i_1\in\{1,3,5\},i_2\in\{2,4,6\},i_3\in\{7,8\}}\subs\La$, откуда $|\La\cap H|=18$,
$|\La\sm H|=38$, $2\cdot|\La\sm H|=76>|\De|+6\ge|\De\sm H|+6$.
\end{proof}

\section{Доказательства основных результатов}\label{promain}

Данный параграф посвящён доказательству теоремы~\ref{main}.

Вернёмся к~обозначениям и~предположениям из \Ss\ref{introd}.

Положим $\de:=1\in\R$, если представление~$R$ алгебры~$\ggt_{\Cbb}$ ортогонально, и~$\de:=2\in\R$ в~противном случае.

Фиксируем максимальную коммутативную подалгебру~$\tgt$ алгебры~$\ggt$ и~картановскую подалгебру $\tgt_{\Cbb}:=\tgt\otimes\Cbb$ алгебры~$\ggt_{\Cbb}$.
В~результате возникают система корней $\De\subs\tgt_{\Cbb}^*$ и~её группа Вейля $W\subs\GL(\tgt_{\Cbb}^*)$. Имеем $\ggt\cong\sug_{r+1}$, $r>1$,
и~поэтому $\De\subs\tgt_{\Cbb}^*$\т неразложимая система корней типа~$A_r$, причём
$\ha{\De}=\bc{\la\in\tgt_{\Cbb}^*\cln\la(\tgt)\subs i\R}\subs\tgt_{\Cbb}^*=\ha{\De}\oplus i\ha{\De}$. Фиксируем систему простых корней $\Pi\subs\De$
и~соответствующую ей камеру Вейля $C\subs\ha{\De}\subs\tgt_{\Cbb}^*$.

Обозначим через $P$ и~$Q$ решётки $\bc{\la\in\ha{\De}\cln\ha{\la|\al}\in\Z\,\fa\al\in\De}\subs\ha{\De}$ и~$\ha{\De}_{\Z}\subs\ha{\De}$
соответственно. Имеем $Q\subs P$. Пусть $\la\in(P\cap C)\sm\{0\}$\т старший вес представления~$R$ алгебры~$\ggt_{\Cbb}$ относительно системы простых
корней $\Pi\subs\De\subs\tgt_{\Cbb}^*$.

Положим $\La:=W\la\subs P$, $\Inn{\La}:=\conv(\La)\cap(\La+Q)\subs P$ и~$n:=r+1\in\N$. Легко видеть, что множество весов представления~$R$
алгебры~$\ggt_{\Cbb}$ совпадает с~подмножеством $\Inn{\La}\subs P$.

Для доказательства теоремы~\ref{main} применим метод <<от противного>>\: предположим, что $V/G$\т гладкое многообразие, а~линейная алгебра
$R(\ggt_{\Cbb})$ не изоморфна ни одной из линейных алгебр $\ad(\ggt_{\Cbb})$, $\ph_1(A_r)$, $\ph_2(A_r)$, $(2\ph_1)(A_r)$ ($r>1$), $(2\ph_2)(A_3)$,
$\ph_3(A_5)$ и~$\ph_4(A_7)$. Второе условие означает, что
\eqn{\label{sug}
\begin{array}{c}
\la\notin\De\cup\{\ph_1,\ph_2,\ph_{r-1},\ph_r,2\ph_1,2\ph_r\};\\
\begin{array}{lll}
(r=3)\quad&\Ra\quad&(\la\ne2\ph_2);\\
(r=5)\quad&\Ra\quad&(\la\ne\ph_3);\\
(r=7)\quad&\Ra\quad&(\la\ne\ph_4).
\end{array}\end{array}}
Кроме того, поскольку линейные алгебры $\ph_j(A_r)$ и~$\ph_{n-j}(A_r)$ ($j=1\sco r$), а~также линейные алгебры $(\ph_1+\ph_{r-1})(A_r)$
и~$(\ph_2+\ph_r)(A_r)$ изоморфны, без ограничения общности мы можем (и~будем) считать, что
\eqn{\label{left}
\begin{array}{c}
(\la=\ph_j,\quad j=1\sco r)\quad\Ra\quad\Br{j\le\frac{n}{2}};\\
\la\ne\ph_1+\ph_{r-1}.
\end{array}}

Положим $\Pi_{\la}:=\bc{\al\in\Pi\cln\ha{\la|\al}\ne0}\subs\Pi$. Кроме того, обозначим через~$\Pc$ семейство всех неразложимых систем простых корней
$\Pi'\subs\Pi\subs\tgt_{\Cbb}^*$ порядка $r-2$.

\begin{stm}\label{cov} Предположим, что $r>2$. Тогда система простых корней~$\Pi$ совпадает с~объединением всех своих подмножеств $\Pi'\in\Pc$.
\end{stm}

\begin{proof} См. утверждение~3.1 в~\cite[\Ss3]{My0}.
\end{proof}

\begin{lemma} Пусть $\Pi'\in\Pc$\т система простых корней, удовлетворяющая соотношению $(r>2)\Ra(\Pi_{\la}\cap\Pi'\ne\es)$. Если
$\la\ne\ph_2+\ph_r$, то выполняется по крайней мере одно из нижеследующих условий \eqref{con1} и~\eqref{con2}\:
\begin{align}
&\la\in\{\ph_3\sco\ph_{r-2}\};\quad\quad\quad&&\br{\Pi'=(1\sco r-2)\subs\Pi}\ \Ra\ (r<8);\label{con1}\\
&r>2;\quad\quad\quad&&\Pi_{\la}\cap\Pi'=\{\al\}\subs\Pi,\quad\al\in\pd\Pi'\subs\Pi,\quad\ha{\la|\al}=1.\label{con2}
\end{align}
\end{lemma}

\begin{proof} Ясно, что $H:=\ba{\{\la\}\cup\Pi'}\subs\ha{\De}\subs\tgt_{\Cbb}^*$\т $(r-1)$\д мерное (вещественное) подпространство, и, следовательно,
пересечение ядер всех линейных функций этого подпространства имеет вид $\Cbb\xi\subs\tgt_{\Cbb}$, $\xi\in\tgt\sm\{0\}$.

Имеем $\Pi\cong A_r$ и~$\Pi'\cong A_{r-2}$. Как следствие, $|\De|=r(r+1)$ и~$\bgm{\De\cap\ha{\Pi'}}=(r-2)(r-1)$. Значит,
$|\De\cap H|\ge\bgm{\De\cap\ha{\Pi'}}=(r-2)(r-1)$, $|\De\sm H|\le r(r+1)-(r-2)(r-1)=4r-2$.

Допустим, что
\eqn{\label{st1}
\exi v\in V\quad\quad\quad\xi\in\ggt_v,\quad\rk\ggt_v=1.}

В~силу леммы~\ref{bms}, $\de\cdot\bgm{\Inn{\La}\sm H}\le|\De\sm H|+6\le4r+4=4n$. При этом $(\de=1)\Ra(-\La=\La)$. Мы видим, что
\equ{
\bgm{\Inn{\La}\sm H}\le4n;\quad(-\La=\La)\lor\Br{\bgm{\Inn{\La}\sm H}\le2n}.}
Далее, согласно~\eqref{left}, $\la\notin\{\ph_1+\ph_{r-1},\ph_2+\ph_r\}$. Теперь, пользуясь леммами \ref{lah} и~\ref{lah1}, а~также
соотношениями~\eqref{sug}, получаем~\eqref{con1}.

Если же условие~\eqref{st1} не выполняется, то, в~силу леммы~3.4 в~\cite[\Ss3]{My0}, имеют место соотношения~\eqref{con2}.
\end{proof}

\begin{imp} Пусть $\Pi'\in\Pc$\т система простых корней, не удовлетворяющая условию~\eqref{con1}. Если $\la\ne\ph_2+\ph_r$, то
\begin{gather}
|\Pi_{\la}\cap\Pi'|\le1,\quad\quad\Pi_{\la}\cap\br{\Pi'\sm(\pd\Pi')}=\es,\quad\quad\quad\fa\al\in\Pi_{\la}\cap\Pi'\quad\ha{\la|\al}=1;\label{undi}\\
\br{(r>2)\ \Ra\ (\Pi_{\la}\cap\Pi'\ne\es)}\quad\Ra\quad r>2.\notag
\end{gather}
\end{imp}

\begin{imp}\label{pila} Пусть $\Pi'\in\Pc$\т система простых корней, не удовлетворяющая условию~\eqref{con1}. Если $\la\ne\ph_2+\ph_r$, то $r>2$ и,
кроме того, выполняется~\eqref{undi}.
\end{imp}

Предположим, что $\la=\ph_2+\ph_r$.

В~силу~\eqref{sug}, $\la\ne2\ph_r$, откуда $r>2$. Далее, представление~$R$ алгебры~$\ggt_{\Cbb}$ не является самосопряжённым, и~поэтому $\de=2$. Теперь,
пользуясь предложением~\ref{alg} и~следствием~\ref{nosm}, получаем противоречие с~тем, что $V/G$\т гладкое многообразие.

Значит, $\la\ne\ph_2+\ph_r$. Согласно~\eqref{left}, $\la\notin\{\ph_1+\ph_{r-1},\ph_2+\ph_r\}$.

Допустим, что $\la\notin\{\ph_3\sco\ph_{r-2}\}$.

Ни одна система простых корней $\Pi'\in\Pc$ не удовлетворяет~\eqref{con1}. При этом, как легко видеть, $\Pc\ne\es$. Из следствия~\ref{pila} вытекает,
что, во-первых, $r>2$, а~во-вторых, что для любой системы простых корней $\Pi'\in\Pc$ выполняется~\eqref{undi}. Отсюда
\begin{nums}{-1}
\item для всякого $\al\in\Pi_{\la}$ имеем $\ha{\la|\al}=1$ (см. утверждение~\ref{cov})\~
\item на схеме Дынкина системы простых корней~$\Pi$ любым двум различным корням подмножества $\Pi_{\la}\subs\Pi$ соответствуют вершины, путь между
которыми включает в~себя не менее $r-2$ рёбер.
\end{nums}
Значит, $\la\in\{\ph_1\sco\ph_r\}\cup\{\ph_1+\ph_r,\ph_1+\ph_{r-1},\ph_2+\ph_r\}$ либо $r=3$ и~$\la=\ph_1+\ph_2+\ph_3$. В~то же время
$\la\notin\{\ph_3\sco\ph_{r-2}\}\cup\{\ph_1+\ph_{r-1},\ph_2+\ph_r\}$. В~силу~\eqref{sug}, $r=3$ и~$\la=\ph_1+\ph_2+\ph_3$. Представление~$R$
алгебры~$\ggt_{\Cbb}$ ортогонально, и~поэтому $\de=1$. Согласно предложению~\ref{all} и~следствию~\ref{nosm}, фактор $V/G$ не является гладким
многообразием. С~другой стороны, $V/G$\т гладкое многообразие. Получили противоречие.

Тем самым мы установили, что $\la\in\{\ph_3\sco\ph_{r-2}\}$.

Имеем $\la=\ph_j$, $j\in\N$, $3\le j\le r-2$, откуда $r\ge5$. Согласно~\eqref{left}, $j\le\frac{n}{2}$.

Если $r=5$, то $j=3$, $\la=\ph_3$, что противоречит~\eqref{sug}.

Таким образом, $r\ge6$, $3\le j\le\frac{n}{2}$ и~$\la=\ph_j$, причём, в~силу~\eqref{sug}, $(r=7)\Ra(\la\ne\ph_4)$.

\begin{cas} $r=6$ и~$\la=\ph_3$.
\end{cas}

Представление~$R$ алгебры~$\ggt_{\Cbb}$ не является самосопряжённым. Отсюда $\de=2$. Применяя предложение~\ref{thi} и~следствие~\ref{nosm}, получаем
противоречие с~тем, что $V/G$\т гладкое многообразие.

\begin{cas} $r=7$ и~$\la=\ph_3$.
\end{cas}

Представление~$R$ алгебры~$\ggt_{\Cbb}$ не является самосопряжённым. Значит, $\de=2$. Пользуясь предложением~\ref{ths} и~следствием~\ref{nosm}, приходим
к~противоречию с~тем, что $V/G$\т гладкое многообразие.

\begin{cas} $r\ge8$.
\end{cas}

Система простых корней $\Pi':=(1\sco r-2)\subs\Pi$, принадлежащая семейству~$\Pc$, не удовлетворяет условию~\eqref{con1} и, в~силу следствия~\ref{pila},
удовлетворяет~\eqref{undi}. В~частности, $\Pi_{\la}\cap\br{\Pi'\sm(\pd\Pi')}=\es$. При этом $\Pi'\cong A_{r-2}$ и~$r-2\ge6$, откуда
$\pd\Pi'=(1,2,r-3,r-2)\subs\Pi$, $\Pi'\sm(\pd\Pi')=(3\sco r-4)\subs\Pi$. Далее, $j\le\frac{n}{2}=n-\frac{n}{2}<n-\frac{r}{2}\le n-4=r-3$,
$3\le j\le r-4$, а~также $\Pi_{\la}=(j)\subs\Pi$. Значит, $\Pi_{\la}\cap\br{\Pi'\sm(\pd\Pi')}=(j)\ne\es$. Получили противоречие.

Тем самым мы полностью доказали (методом <<от противного>>) теорему~\ref{main}.

\section{Разбор частных случаев}\label{promain1}

В~этом параграфе будет доказана теорема~\ref{main1}.

Как и прежде, мы будем пользоваться обозначениями и~предположениями из \Ss\ref{introd}.

Положим $n:=r+1\in\N$.

Допустим, что линейная группа Ли $G\subs\GL(V)$ связна, а~линейная алгебра $R(\ggt_{\Cbb})$ изоморфна одной из следующих линейных алгебр\:
\begin{nums}{-1}
\item\label{adj} $\ad(\ggt_{\Cbb})$\~
\item\label{tav} $\ph_1(A_r)$ \ter{$r>1$}\~
\item\label{sym} $(2\ph_1)(A_r)$ \ter{$r>1$}\~
\item\label{ext} $\ph_2(A_r)$ \ter{$r>3$}\~
\item\label{ort} $\ph_2(A_3)\cong\ph_1(D_3)$\~
\item\label{ord} $(2\ph_2)(A_3)\cong(2\ph_1)(D_3)$\~
\item\label{thd} $\ph_3(A_5)$\~
\item\label{fth} $\ph_4(A_7)$.
\end{nums}

Вначале разберём случаи \ref{adj}, \ref{tav}, \ref{ort}, \ref{ord} и~\ref{fth}.

Следующие представления комплексных простых групп Ли являются полярными (см.~\cite[\Ss3]{CD})\:
\begin{itemize}
\item присоединённое представление произвольной комплексной простой группы Ли\~
\item представление $R_{\ph_1}+R'_{\ph_1}$ комплексной простой группы Ли $\SL_n(\Cbb)$\~
\item представления $R_{\ph_1}$ и~$R_{2\ph_1}$ комплексной простой группы Ли $\SO_6(\Cbb)$\~
\item представление~$R_{\ph_4}$ комплексной простой группы Ли $\SL_8(\Cbb)$.
\end{itemize}
Значит, в~каждом из случаев \ref{adj}, \ref{tav}, \ref{ort}, \ref{ord} и~\ref{fth} линейная группа Ли $G\subs\GL(V)$ является полярной и, в~силу
леммы~\ref{pol}, фактор $V/G$ гомеоморфен замкнутому полупространству (в~частности, не является многообразием).

Теперь разберём случай~\ref{thd}.

Допустим, что линейная группа $G\subs\GL(V)$ связна, а~линейная алгебра $R(\ggt_{\Cbb})$ изоморфна линейной алгебре $\ph_3(A_5)$.

Рассмотрим $n$\д мерное эрмитово пространство~$\Eb$. Это пространство допускает разложение в~прямую сумму двух ортогональных трёхмерных подпространств
$\Eb_+$ и~$\Eb_-$.

Ясно, что тавтологическое представление $G\cln V$ изоморфно естественному действию $\SU(\Eb)\cln\Eb^{\wg3}$. Без ограничения общности будем считать,
что $G=\SU(\Eb)$, $V=\Eb^{\wg3}$, а~тавтологическое представление $G\cln V$ совпадает с~естественным действием $\SU(\Eb)\cln\Eb^{\wg3}$.

Как легко заметить, пространство~$V$ разлагается в~прямую сумму своих подпространств $V_j:=\Eb_+^{\wg(3-j)}\wg\Eb_-^{\wg j}$ ($j=0,1,2,3$), причём
$V_0=\Cbb v$, $v\in V_0\sm\{0\}$. Далее, имеем $G_v=\SU(\Eb_+)\times\SU(\Eb_-)\subs G$, где $\SU(\Eb_{\pm}):=\{g\in G\cln\Eb^g\sups\Eb_{\mp}\}\subs G$.
Ввиду вышесказанного, $G_vV_j=V_j$ ($j=0,1,2,3$), $\ggt v=(i\R v)\oplus V_1$, $V=(\ggt v)\oplus(\R v)\oplus V_2\oplus V_3$, $V_3\subs V^{G_v}$,
$\dim_{\Cbb}V_3=1$. Поэтому представление $G_v\cln N_v$ изоморфно прямой сумме тождественного действия $G_v\cln\R^3$ и~представления $G_v\cln V_2$.
Последнее, в~свою очередь, изоморфно естественному действию $\br{\SU(\Eb_+)\times\SU(\Eb_-)}\cln(\Eb_+\otimes\Eb_-^{\wg2})$ и, что равносильно,
естественному действию $\br{\SU(\Eb_+)\times\SU(\Eb_-)}\cln(\Eb_+\otimes\Eb_-^*)$. Теперь, полагая $d:=m:=3\in\N$ и~применяя к~представлению
$G_v\cln N_v$ утверждение~\ref{read}, получаем, что фактор данного представления не является многообразием. Согласно лемме~\ref{slice}, не является им
и~фактор $V/G$.

Перейдём к~разбору оставшихся случаев \ref{sym} и~\ref{ext}.

Далее будем считать, что линейная группа $G\subs\GL(V)$ связна, а~линейная алгебра $R(\ggt_{\Cbb})$ изоморфна одной из линейных алгебр
$(2\ph_1)(A_r)$ \ter{$r>1$} и~$\ph_2(A_r)$ \ter{$r>3$}.

Положим $\de:=1\in\R$ (соотв. $\de:=-1\in\R$), если линейная алгебра $R(\ggt_{\Cbb})$ изоморфна линейной алгебре $(2\ph_1)(A_r)$ (соотв. линейной
алгебре $\ph_2(A_r)$).

Допустим, что $\de=-1$ и~$n\in2\Z+1$.

Представление $R_{\ph_2}+R'_{\ph_2}$ комплексной простой группы Ли $\SL_n(\Cbb)$ является полярным (см.~\cite[\Ss3]{CD}). Согласно лемме~\ref{pol},
фактор $V/G$ гомеоморфен замкнутому полупространству и, как следствие, не является многообразием.

С~этого момента будем считать, что $(\de=1)\lor(n\in2\Z)$.

Рассмотрим $n$\д мерное эрмитово пространство~$\Eb$ со скалярным умножением $f_0(\cdot,\cdot)$, комплексное пространство~$B$ всех билинейных форм на
пространстве~$\Eb$ и~подпространство $B_{\de}\subs B$ всех форм $f\in B$, таких что $f(x,y)=\de\cdot f(y,x)$ ($x,y\in\Eb$).

Тавтологическое представление $G\cln V$ изоморфно представлению
\eqn{\label{repr}
\begin{array}{c}
\SU(\Eb)\cln B_{\de},\quad\quad(gf)(x,y):=f(g^{-1}x,g^{-1}y)\in\Cbb\\
(g\in\SU(\Eb),\ \ f\in B_{\de},\ \ x,y\in\Eb).
\end{array}}

Найдётся форма $f_1\in B_{\de}$, для которой (фиксированный) ортонормированный базис эрмитова пространства~$\Eb$ является ортонормированным при $\de=1$
и~симплектическим при $\de=-1$. При этом, очевидно, существует антилинейный оператор $F\cln\Eb\to\Eb$, такой что $F^2=\de E\in\Un(\Eb)$
и~$f_1(x,y)=f_0(Fx,y)$ ($x,y\in\Eb$).

Пусть $\ta$, $\ta_0$ и~$\ta_1$\т инволютивные автоморфизмы \textit{вещественной} группы Ли $\GL_{\Cbb}(\Eb)$, однозначно задаваемые соотношениями
$\ta(g)F=Fg$ и~$f_i\br{\ta_i(g)x,gy}=f_i(x,y)$ ($i=0,1$, $g\in\GL_{\Cbb}(\Eb)$, $x,y\in\Eb$). Для произвольных $g\in\GL_{\Cbb}(\Eb)$ и~$x,y\in\Eb$ имеем
\equ{
f_0\br{F\ta_1(g)x,gy}=f_1\br{\ta_1(g)x,gy}=f_1(x,y)=f_0(Fx,y)=f_0\br{\ta_0(g)Fx,gy},}
откуда $F\ta_1(g)=\ta_0(g)F$. Следовательно, $\ta_0\eqi\ta\circ\ta_1$, $\ta_0\circ\ta_1\eqi\ta_1\circ\ta_0\eqi\ta$. Вещественная алгебра Ли
$\Lie\br{\GL_{\Cbb}(\Eb)}=\glg_{\Cbb}(\Eb)$ обладает коммутирующими между собой инволютивными автоморфизмами $\si_i:=d\ta_i$ ($i=0,1$). Легко видеть, что
$f_i\br{\si_i(\xi)x,y}+f_i(x,\xi y)=0$ ($i=0,1$, $\xi\in\glg_{\Cbb}(\Eb)$, $x,y\in\Eb$). Значит, автоморфизм~$\si_0$ (соотв. автоморфизм~$\si_1$)
вещественной алгебры Ли $\glg_{\Cbb}(\Eb)$ антилинеен (соотв. линеен) над полем~$\Cbb$, а~подпространство
$\br{\glg_{\Cbb}(\Eb)}^{-\si_1}\subs\glg_{\Cbb}(\Eb)$ содержит подпространство $\Cbb E\subs\glg_{\Cbb}(\Eb)$.

Ясно, что $\br{\GL_{\Cbb}(\Eb)}^{\ta_0}=\Un(\Eb)\subs\GL_{\Cbb}(\Eb)$. Далее, отображение
\equ{
S\cln\glg_{\Cbb}(\Eb)\to B,\quad\quad\br{S(\xi)}(x,y):=f_1(x,\xi y)\in\Cbb\quad\quad(\xi\in\glg_{\Cbb}(\Eb),\ \ x,y\in\Eb)}
является изоморфизмом комплексных линейных пространств. Кроме того, для любых $g\in\SU(\Eb)$, $\xi\in\glg_{\Cbb}(\Eb)$ и~$x,y\in\Eb$ имеем
\begin{gather*}
\br{S(\xi)}(g^{-1}x,g^{-1}y)=f_1(g^{-1}x,\xi g^{-1}y)=f_1\br{x,\ta_1(g)\xi g^{-1}y}=\Br{S\br{\ta_1(g)\xi g^{-1}}}(x,y);\\
\de\cdot\br{S(\xi)}(y,x)=\de\cdot f_1(y,\xi x)=f_1(\xi x,y)=-f_1\br{x,\si_1(\xi)y}=-\Br{S\br{\si_1(\xi)}}(x,y).
\end{gather*}
Отсюда следует, что $S^{-1}(B_{\de})=\br{\glg_{\Cbb}(\Eb)}^{-\si_1}\subs\glg_{\Cbb}(\Eb)$, причём представление~\eqref{repr} изоморфно представлению
\eqn{\label{reps}
\SU(\Eb)\cln\br{\glg_{\Cbb}(\Eb)}^{-\si_1},\quad g\cln\xi\to\ta_1(g)\xi g^{-1}.}
Как уже было сказано, тавтологическое представление $G\cln V$ изоморфно представлению~\eqref{repr}, а~значит, и~представлению~\eqref{reps}.
Без ограничения общности будем считать, что $G=\SU(\Eb)\subs\GL_{\Cbb}(\Eb)$, $V=\br{\glg_{\Cbb}(\Eb)}^{-\si_1}\subs\glg_{\Cbb}(\Eb)$, а~тавтологическое
представление $G\cln V$ совпадает с~представлением~\eqref{reps}.

Коммутирующие между собой автоморфизмы $\ta_0$ и~$\ta_1$ группы Ли $\GL_{\Cbb}(\Eb)$ переводят в~себя её коммутант $\SL_{\Cbb}(\Eb)$, а~значит,
и~подгруппу $\br{\SL_{\Cbb}(\Eb)}^{\ta_0}=\SL_{\Cbb}(\Eb)\cap\Un(\Eb)=G$.

Коммутирующие между собой инволютивные автоморфизмы $\si_0$ и~$\si_1$ алгебры $\glg_{\Cbb}(\Eb)$ переводят в~себя её центр $\Cbb E$ и~коммутант
$\slg_{\Cbb}(\Eb)$, что позволяет разложить эту алгебру в~прямую сумму $\si_1$\д инвариантных подпространств $\Cbb E$,
$\br{\slg_{\Cbb}(\Eb)}^{\si_0}$ и~$\br{\slg_{\Cbb}(\Eb)}^{-\si_0}$. Далее, из равенства $\br{\SL_{\Cbb}(\Eb)}^{\ta_0}=G\subs\GL_{\Cbb}(\Eb)$ следует, что
$\br{\slg_{\Cbb}(\Eb)}^{\si_0}=\ggt\subs\glg_{\Cbb}(\Eb)$. Поскольку $\Cbb\br{\slg_{\Cbb}(\Eb)}=\slg_{\Cbb}(\Eb)\subs\glg_{\Cbb}(\Eb)$, а~$\si_0$\т
антилинейный над полем~$\Cbb$ автоморфизм алгебры $\glg_{\Cbb}(\Eb)$, имеем
$\br{\slg_{\Cbb}(\Eb)}^{-\si_0}=i\cdot\br{\slg_{\Cbb}(\Eb)}^{\si_0}=i\ggt\subs\glg_{\Cbb}(\Eb)$.

Согласно вышесказанному, $V=\br{\glg_{\Cbb}(\Eb)}^{-\si_1}=(\Cbb E)^{-\si_1}\oplus\ggt^{-\si_1}\oplus(i\ggt)^{-\si_1}\subs\glg_{\Cbb}(\Eb)$ и,
кроме того, $(\si_1-\id_{\ggt})\ggt=\ggt^{-\si_1}\subs\ggt$. Пользуясь соотношениями $(\Cbb E)^{-\si_1}=\Cbb E\subs\glg_{\Cbb}(\Eb)$
и~$\si_1\in\Aut_{\Cbb}\br{\glg_{\Cbb}(\Eb)}$, получаем, что
\eqn{\label{dis}
V=(\Cbb E)\oplus\ggt^{-\si_1}\oplus i(\ggt^{-\si_1})\subs\glg_{\Cbb}(\Eb).}

Ясно, что $v:=E\in V$. При этом (см.~\eqref{reps}) $G_v=G^{\ta_1}\subs G$, $\ggt v=(\si_1-\id_{\ggt})\ggt=\ggt^{-\si_1}\subs\ggt$, а~действие
$G_v\cln V$ осуществляется по правилу $g\cln\xi\to g\xi g^{-1}$. В~силу~\eqref{dis}, представление $G_v\cln N_v$ изоморфно прямой сумме
тождественного действия $G^{\ta_1}\cln\R^2$ и~присоединённого действия $G^{\ta_1}\cln\ggt^{-\si_1}$.

Согласно лемме~\ref{wey}, фактор $\ggt^{-\si_1}/G^{\ta_1}$ гомеоморфен замкнутому полупространству. Кроме того,
$N_v/G_v\cong\R^2\times(\ggt^{-\si_1}/G^{\ta_1})$ и, следовательно, фактор $N_v/G_v$ также гомеоморфен замкнутому полупространству.

Тем самым мы установили, что фактор $N_v/G_v$ не является многообразием. Согласно лемме~\ref{slice}, не является им и~фактор $V/G$.

Таким образом, в~каждом из случаев \ref{sym} и~\ref{ext} фактор $V/G$ не является многообразием.

Теперь теорема~\ref{main1} полностью доказана.

Из теорем \ref{main} и~\ref{main1} сразу вытекает следствие~\ref{main2}.

\newpage

\end{document}